\newcommand{\PP}{\mathbb{P}}
\newcommand{\Q}{\mathbb{Q}}
\newcommand{\R}{\mathbb{R}}
\newcommand{\Z}{\mathbb{Z}}
\newcommand{\OO}{\mathscr{O}}
\DeclareMathOperator{\divv}{div}
\DeclareMathOperator{\Div}{Div}
\DeclareMathOperator{\ord}{ord}
\DeclareMathOperator{\Pic}{Pic}
\DeclareMathOperator{\supp}{supp}
\DeclareMathOperator{\Bl}{Bl}
\newcommand{\wequiv}{\equiv_{\text{W}}}
\newcommand{\WDiv}{\text{WDiv}}
\newcommand{\cequiv}{\equiv_{\text{C}}}
\newcommand{\CDiv}{\text{CDiv}}
\newcommand{\cyc}{\text{cyc}}
\newtheorem{theorem}{Theorem}[section]
\newtheorem{lemma}[theorem]{Lemma}
\newtheorem{corollary}[theorem]{Corollary}
\newtheorem{proposition}[theorem]{Proposition}
\theoremstyle{definition}
\newtheorem{definition}[theorem]{Definition}
\newtheorem{example}[theorem]{Example}
\theoremstyle{remark}
\newtheorem{remark}[theorem]{Remark}
\title{Interpolation of low degree points on curves}
\author{Eden Granot}
\email{eden.granot@mail.huji.com}
\address{Eden Granot, Einstein Institute of Mathematics\\
	The Hebrew University of Jerusalem, Jerusalem, Israel}
\date{\today}
\begin{document}
	\begin{abstract}
		We study points of moderately low degree on a curve $C$ over a number field, which is embedded on a nice toric surface $S$. Recently, Smith and Vogt related the linear equivalence classes of such points to intersections of $C$ with curves in the ambient surface. We show that when $C$ is sufficiently effective and ample with simple singularities, all but finitely many sufficiently low degree points are obtained via intersections of $C$ with curves in the ambient surface $S$. Our methods make use of the toric geometry of the ambient surface $S$ to produce curves which interpolate the low degree points on $C$. Furthermore, we generalise a result by Debarre and Klassen to singular plane curves and higher degrees of points. 
	\end{abstract}
	\maketitle
	
	\section{Introduction}
	Let $k$ be a number field. Let $C$ be a projective geometrically integral curve over $k$. In this paper, we are interested in understanding the set of degree $e$ points on $C$,
	\begin{equation*}
		|C|_e \colonequals\{P\in |C|\mid \deg(P)=e\},
	\end{equation*}
	where $|C|$ is the set of closed points on $C$ and the degree $\deg(P)$ is the degree of the residue field extension $[k(P):k]$. 
	
	When $e=1$, this is the set of $k$-rational points $C(k)$. When $C$ is smooth and the genus of $C$ is at least $2$, by Faltings' theorem \cite{a07a8b9c-f9ee-30b2-8d26-66e0414778da}, the set $|C|_1$ is finite. Surprisingly, for higher degree points on general curves, even for small values of $e$ relative to invariants of $C$, there is no such analogue of finiteness. However, for some classes of curves and respective degrees, we know descriptions of $|C|_e$. 
	
	Suppose that $C \subseteq \PP^2_k$ is a smooth plane curve of degree $d$. When $C$ admits rational points there is a natural source of degree $d-1$ points on $C$: intersections of $C$ with a line $\ell \subseteq \PP^2_k$ passing through a rational point of $C$. Debarre and Klassen \cite{Klassen1992PointsOL} showed that for $d\geq 8$, all but finitely many degree $e\leq d-1$ arise as such intersections. Their result is proved using tools from a paper of Coppens and Kato \cite{Coppens1991TheGO} which generalised Max Noether's gonality theorem to certain singular plane curves. The result of Debarre--Klassen can be seen as an arithmetic counterpart to Max Noether's gonality theorem.
	
	Later on, Smith and Vogt \cite{10.1093/imrn/rnaa137} generalised this result to curves on surfaces other than $\PP_k^2$, using ideas from Reider's method \cite{0a1ebf56-1d8f-3732-ba76-902ace340afa}. The same ideas were used in Lazarsfeld’s calculation of the minimal gonality of complete intersection curves \cite{lazarsfeld1997lectures}. Among their results, they showed that when $C\subseteq S$ is an ample nice curve inside a nice surface of irregularity $h^1(S,\OO_S)=0$, there are only finitely many linear equivalence classes of degree $e<\frac{C^2}{9}$ points. Furthermore, the linear equivalence class of such a point which moves can be described via intersections in the ambient surface $S$. 
	
	There is a crucial difference between the results of Debarre--Klassen and Smith--Vogt: while Debarre--Klassen describes all but finitely many degree $e$ points, Smith--Vogt only gives information on linear equivalence classes. The principal aim of this paper is to find a generalisation of Debarre--Klassen to possibly singular curves on toric surfaces that goes beyond linear equivalence classes. We prove the following theorem (\cref{maintoric-proof}).
	
	\begin{theorem}
		\label{maintoric}
		Let $S$ be a nice toric surface. Then there exists an explicit constant $\lambda(S)\in (-\infty,2]$ depending only on $S$, such that the following holds. Let $C\subseteq S$ be a geometrically integral closed curve with simple singularities of multiplicities $\delta_1,\dots, \delta_n$ such that $C+K_S>0$. Assume furthermore that the normalisation of $C$ is ample on the blowup of $S$ at the singular points of $C$. Then there exists a closed curve $D\subseteq S$ with $C\cdot D\leq \frac{C^2}{2}$, and such that for all positive integers
		\begin{equation*}
			e<\min \left(\frac{C^2-\sum_{i=1}^{n}\delta_i^2}{9}, \frac{C^2}{4}+\lambda(S)\right),
		\end{equation*}
		the set $|C|_e$ of degree $e$ points on $C$ admits the following decomposition
		\begin{equation*}
			|C|_e=F\cup \bigcup_{i=1}^m S_i,
		\end{equation*} 
		where $F$ is a finite set, and for each $i$, the set $S_i$ is the set of degree $e$ points obtained by intersecting $C$ with the linear system of curves linearly equivalent to $D$ that pass through a certain effective Weil divisor $B_i\subseteq C$, that is, 
		\begin{equation*}
			S_i = \left\{P\in|C|_e \text{ }\bigg|\text{ } \exists D'\in |D|:P=C\cap D'-B_i\right\}.
		\end{equation*}
		Furthermore, the finite set $F$ is the set of all singular points and regular points of degree $e$ which do not move as Cartier divisors.
	\end{theorem}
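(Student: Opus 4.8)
The plan is to reduce the problem to the linear-equivalence statement of Smith–Vogt and then promote the linear-equivalence information to actual divisor information using the toric structure of $S$. First I would pass to the normalisation $\nu\colon \widetilde C\to C$ and, simultaneously, to the blowup $\pi\colon \widetilde S\to S$ of $S$ at the simple singular points $p_1,\dots,p_n$ of $C$, so that $\widetilde C$ embeds in $\widetilde S$ as a smooth curve with $\widetilde C = \pi^* C - \sum \delta_i E_i$. The hypothesis that $\widetilde C$ is ample on $\widetilde S$, together with $C + K_S > 0$, is exactly what is needed to apply the Smith–Vogt machinery on $\widetilde S$: for $e$ below the stated bound $\tfrac{1}{9}(C^2 - \sum \delta_i^2) = \tfrac19 \widetilde C^2$, every degree $e$ point of $\widetilde C$ that moves in a positive-dimensional linear system lies in one of finitely many linear equivalence classes, each of which is cut out by intersecting $\widetilde C$ with a fixed ample class $\widetilde D$ on $\widetilde S$ and subtracting a fixed base locus. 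Points of $C$ that are singular, or regular but non-moving as Cartier divisors, are thrown into the finite set $F$ (finiteness of non-moving points of bounded degree on a curve of genus $\ge 2$ follows from a Faltings-type argument, or is already recorded in the earlier sections).

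The second, and genuinely new, ingredient is to show that the finitely many linear equivalence classes produced above are in fact \emph{represented by a single effective divisor class} $D$ on $S$ (pulled back from $\widetilde S$), with the intersection bound $C\cdot D \le \tfrac{C^2}{2}$, and that passing through a fixed effective Weil divisor $B_i\subseteq C$ suffices to pin down the actual point rather than merely its class. Here is where the second bound $e < \tfrac{C^2}{4} + \lambda(S)$ and the toric geometry enter. On a nice toric surface, the nef cone, the effective cone, and the semigroup of effective curve classes are all completely explicit (described by the fan), so one can write down the constant $\lambda(S)$ governing when a class of the form $\widetilde C - (\text{degree } e \text{ class})$ remains effective and movable; I would extract $\lambda(S)$ from the Riemann–Roch inequality on $\widetilde S$ combined with Serre duality and the effectivity criterion on the toric surface, ensuring $\lambda(S)\le 2$ and that $\lambda(S)$ depends only on $S$. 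The class $D$ is then chosen as the ``residual'' class $C - (\text{the moving class})$ or directly as the Smith–Vogt class $\widetilde D$ descended to $S$; the bound $C\cdot D\le C^2/2$ is a numerical consequence of $e$ being less than $C^2/4$ up to the additive correction.

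Next I would assemble the decomposition. For each of the finitely many moving linear equivalence classes, choose the base divisor $B_i$ to be the fixed part of the corresponding linear system on $C$ obtained by restricting $|D|$; then a degree $e$ point $P$ lies in $S_i$ precisely when $P + B_i \sim C\cdot D'$ for some $D'\in|D|$, i.e.\ $P = C\cap D' - B_i$, which is the displayed description of $S_i$. The points not captured this way are exactly those whose linear equivalence class does not move, or which are singular; these form $F$, and finiteness of $F$ is the combination of Smith–Vogt finiteness of non-moving classes with the finiteness of the singular locus. Finally I would verify that $m$ is finite (it is the number of moving classes, bounded by Smith–Vogt) and that the description of $F$ as ``singular points together with regular degree $e$ points not moving as Cartier divisors'' is consistent: a regular point moves as a Cartier divisor on $C$ iff its class moves in $\Pic C$, so the two descriptions of which points are excluded agree.

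The main obstacle I expect is the bookkeeping between $C$ on $S$ and $\widetilde C$ on $\widetilde S$: degree $e$ points on $C$ are not literally degree $e$ points on $\widetilde C$ once they meet the singular locus, and intersections $C\cap D'$ on $S$ differ from $\widetilde C\cap \pi^*D'$ on $\widetilde S$ by the exceptional contributions $\sum \delta_i E_i$. Controlling this discrepancy — making sure the base divisors $B_i$ absorb exactly the exceptional and multiplicity corrections, and that the bound $e < \tfrac19(C^2 - \sum\delta_i^2)$ is precisely what keeps $\widetilde C^2$ large enough after the blowup — is the delicate part, and is presumably where the ``simple singularities'' hypothesis does its work (it guarantees that a single blowup separates the branches and that $\widetilde C$ is smooth, so no iterated blowups or log-resolution subtleties arise). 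Deriving the explicit value of $\lambda(S)$ from the toric combinatorics, while routine in principle, is the other place where care is needed to get the clean bound $\lambda(S)\in(-\infty,2]$.
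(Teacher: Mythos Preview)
Your plan has a genuine gap: the curve $D$ is not produced by Smith--Vogt, and the mechanism you describe for finding it does not work. Smith--Vogt (as quoted in the paper and as you invoke it) gives only that $W_e\widetilde C_{\bar k}$ contains no positive-dimensional abelian subvarieties, hence by Faltings that $W_e\widetilde C(k)$ is finite; it does \emph{not} hand you a single curve class on $\widetilde S$ cutting out all the moving classes. Even the stronger statements in Smith--Vogt's paper about residual curves produce, a priori, a \emph{different} curve for each moving linear equivalence class, whereas the theorem asserts a single $D$ works for all of them simultaneously. Your suggestion to take $D$ as ``$C-(\text{the moving class})$'' is therefore ill-defined, and ``descending the Smith--Vogt class'' does not give a well-defined object either.

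The paper's approach is quite different and more direct. The curve $D$ is constructed explicitly as $D=\lfloor C/2\rfloor$ in a chosen toric representation of $C$; this is done on $S$ itself, not on the blowup (note $\widetilde S$ is generally not toric, so your plan to run toric arguments there would fail). The constant $\lambda(S)$ is defined combinatorially from the prime toric divisors, and the bound $e<\tfrac{C^2}{4}+\lambda(S)$ enters through a Coppens--Kato/Martens-type argument: using Martens' inequality $h^0(Z+E)+h^0(Z-E)\ge 2h^0(Z)$ together with a Riemann--Roch computation on $C$ and toric vanishing on $S$, one shows that for \emph{every} basepoint-free pencil $E$ of degree $e$ on $C$ one has $h^0(C,D|_C-E)>0$. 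This is the key new ingredient and is entirely absent from your sketch. The surjectivity $H^0(S,D)\twoheadrightarrow H^0(C,D|_C)$, which is what actually upgrades linear equivalence to honest intersection, comes from the toric vanishing $H^1(S,D-C)=H^1(S,-\lceil C/2\rceil)=0$. Smith--Vogt on the blowup is used only at the very end, and only to supply the finiteness of $W_e\widetilde C(k)$ needed to bound the number of classes $S_i$.
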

	
	\begin{remark}
		\label[remark]{blample}
		The question of whether $\widetilde{C}$ is ample on the blowup $\Bl(S)$ of $S$ along the singular points of $C$ is closely related to the Seshadri constants $\epsilon(S,\OO_S(C); x)$. On toric varieties, there are various results regarding the values of the Seshadri constants; see for example \cite{Rocco1997GenerationOK} and \cite{Ito+2014+151+174}. In \cref{ample} we show how to use these results to produce explicit sufficient conditions for the ampleness of $\widetilde{C}$ on $\Bl(S)$.
	\end{remark}
	
	\begin{remark}
		\label[remark]{basebig}
		In the case when $S=\PP_k^2$ is the projective plane, the constant $\lambda(S)$ is $-\frac{1}{4}$; see \cref{lambdap2}. Let $C\subseteq S$ be a nice curve of degree $d\geq 4$, then the curve $D\subseteq S$ from the above theorem will be of degree $m\colonequals \lfloor \frac{d}{2}\rfloor - 1$. Suppose $e<\frac{d^2}{9}$, then all but finitely many degree $e$ points on $C$ are obtained via intersections of $C$ with a degree $m$ curve passing through certain divisors $B_i\subseteq C$. The degree of $B_i$ is
		\begin{equation*}
			\deg(B_i) = md-e = d\left\lfloor \frac{d}{2}\right\rfloor -d -e > \frac{d(d-1)}{2}-d-\frac{d^2}{9},
		\end{equation*}
		which is way bigger then $\frac{d^2}{9}$, and in particular bigger then $e$. Therefore, the divisors $B_i$ may be hard to understand; we would like a version of the above theorem where the degrees $\deg(B_i)$ are minimised in analogy to Debarre--Klassen. It is worth noting that by throwing components of $D$, one can obtain a curve $F\subseteq S$ of degree $m_0\leq m$ which induces a decomposition on the degree $e$ points of $C$; in this new decomposition, the base locus divisors $B'_i$ have degrees satisfying $\deg B'_i\leq m_0^2\leq m^2$. The existence of such "small" $F$ which induce a decomposition is mainly due to the vanishing of $H^1(S,-)$ on line bundles.
		
		Later on, at \cref{mainexample} we show that $\deg(B_i)$ cannot be completely minimised on surfaces other than $\PP_k^2$, even on the Hirzebruch surface $F_1$. 
	\end{remark}
	For singular curves on $\PP_k^2$, we prove the following result (\cref{plane-proof}) which generalises the theorem of Debarre--Klassen to singular curves and degrees higher than $d-1$.
	
	\begin{theorem}
		\label{plane}
		Let $C\subseteq\mathbb{P}_{k}^{2}$ be a geometrically integral plane curve of degree $d\geq 4$ with $\delta \leq \frac{d-3}{3}$ ordinary nodes and cusps as its singularities. Let
		\begin{equation*}
			e< \max\left(\frac{d^2-4\delta}{9}, \text{   } \frac{1}{2}\left(\left\lceil \frac{d+\sqrt{d^2-36\delta}}{6}\right\rceil\left(d-\left\lceil \frac{d+\sqrt{d^2-36\delta}}{6}\right\rceil\right)-\delta\right)\right)
		\end{equation*}
		be a positive integer. Then the set of degree $e$ points on $C$ admits a decomposition
		\begin{equation*}
			|C|_e=F\cup\bigcup_{i=1}^{n}S_i,
		\end{equation*}
		where $F$ is a finite set, and for each $i$, the set $S_i$ is the set of degree $e$ points obtained by intersecting $C$ with the linear system of degree $m$ curves passing through a certain effective Weil divisor $B_i\subseteq C$. The finite set $F$ is the set of all singular points and regular points which do not move as Cartier divisors. In case that $S_i$ are nonempty, the degree $m$ is a positive integer such that
		\begin{equation*}
			m(d-m)\leq e+\delta < (m+1)(d-(m+1)),
		\end{equation*}
		and in particular the divisors $B_i$ are of degree $\deg B_i = md-e < \frac{e}{2}$.
	\end{theorem}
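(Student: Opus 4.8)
\emph{Setup and first reduction via \cref{maintoric}.} Let $\nu\colon\widetilde C\to C$ be the normalisation and $\pi\colon\Bl\to\PP^2_k$ the blowup at the singular points of $C$, with $\widetilde C\subseteq\Bl$ the proper transform. Since nodes and cusps have multiplicity $2$, one has $\widetilde C\sim dH-2\sum_i E_i$ and $\widetilde C^2=d^2-4\delta$. The surface $\Bl$ is rational, the condition $d\ge4$ is precisely $C+K_{\PP^2}>0$, and the hypothesis $\delta\le\frac{d-3}{3}$ guarantees, through the Seshadri-constant estimates of \cref{ample} (cf.\ \cref{blample}), that $\widetilde C$ is ample on $\Bl$. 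Hence \cref{maintoric} applies with $S=\PP^2_k$, for which $\lambda=-\tfrac14$ (see \cref{basebig}): it already gives a decomposition $|C|_e=F\cup\bigcup_iS_i$ of the stated form, with $F$ the singular points together with the regular points of degree $e$ that are rigid as Cartier divisors, valid for $e<\min\!\big(\tfrac{d^2-4\delta}{9},\tfrac{d^2}{4}-\tfrac14\big)$, but with the $S_i$ cut by curves of the non-optimal degree $\lfloor d/2\rfloor-1$ and with no good control on $\deg B_i$. What remains is (i) to replace those curves by plane curves of the optimal degree $m$, shrinking $B_i$ accordingly, and (ii) to extend the admissible range of $e$ to the $\max$ in the statement, where the Coppens--Kato bound $\tfrac12(m^\ast(d-m^\ast)-\delta)$ with $m^\ast=\lceil\tfrac{d+\sqrt{d^2-36\delta}}{6}\rceil$ can exceed $\tfrac{d^2-4\delta}{9}$.

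\emph{Passage to the normalisation.} For a regular point $P\in|C|_e$ that moves as a Cartier divisor, $P$ is Cartier (smooth point of $C$) and $\nu$ is an isomorphism near $P$, so the preimage $\widetilde P=\nu^{-1}(P)$ is an effective divisor of degree $e$ on $\widetilde C$ with $h^0(\widetilde C,\O_{\widetilde C}(\widetilde P))\ge2$, since $\nu^\ast$ is injective on the relevant spaces of global sections. Singular points and rigid regular points are absorbed into $F$; for the extended range of $e$ not covered by \cref{maintoric}, the finiteness of $F$ comes instead from Faltings' theorem \cite{a07a8b9c-f9ee-30b2-8d26-66e0414778da} applied to the image of $\Sym^e C$ in $\Pic^e C$, whose rigid rational points are the isolated $k$-points of the $0$-dimensional pieces of the Faltings decomposition.

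\emph{Coppens--Kato and descent.} The heart of the matter is to apply the generalisation of Max Noether's gonality theorem to plane curves with nodes and cusps \cite{Coppens1991TheGO} to the moving complete series $|\widetilde P|$: for $e$ below the stated bound it forces an integer $m$ with $m(d-m)\le e+\delta<(m+1)(d-m-1)$ and a fixed effective divisor $\widetilde B$ on $\widetilde C$ such that $\widetilde P+\widetilde B$ is cut on $\widetilde C$ by a plane curve $D'$ of degree $m$, and such that as $\widetilde P$ ranges over its family the cutting curve ranges over all of $|mH|$ through $\nu_\ast\widetilde B$. Pushing forward and applying B\'ezout gives $C\cap D'=P+B_i$ with $B_i\colonequals\nu_\ast\widetilde B$ effective of degree $md-e$; combining $m(d-m)\le e+\delta$ with the bound on $e$ and with $\delta\le\frac{d-3}{3}$ yields $\deg B_i<\frac e2$. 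Only finitely many $B_i$ occur, since $m$ is bounded, the residual divisors lie in a bounded family of subschemes of $C$, and those actually carrying a moving series are finite — either by the explicit description of the base divisors in \cite{Coppens1991TheGO} (supported at the singular points plus a bounded residual special set) or, again, by Faltings on the corresponding subvariety of $\Sym^e\widetilde C$. This gives the asserted $|C|_e=F\cup\bigcup_iS_i$ with $S_i=\{P\in|C|_e:\exists D'\in|mH|,\ C\cap D'=P+B_i\}$, and the description of $F$ is inherited from \cref{maintoric} on its range and from the Faltings argument above otherwise.

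\emph{Main obstacle.} The crux is the Coppens--Kato step: one must extract the \emph{exact} statement that a small-degree moving series on $\widetilde C$ is genuinely traced by plane curves of the prescribed degree $m$, with base divisor among finitely many $B_i$, rather than by some more exotic pencil — this requires the local analysis at cusps as well as nodes, tight control of the residual base locus, and reconciling the two ranges for $e$ (the Reider/Smith--Vogt range $\tfrac{d^2-4\delta}{9}$ coming from ampleness of $\widetilde C$ on $\Bl$ versus the Coppens--Kato range $\tfrac12(m^\ast(d-m^\ast)-\delta)$) so that the output degree is precisely the $m$ with $m(d-m)\le e+\delta<(m+1)(d-m-1)$.
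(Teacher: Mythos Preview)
Your proposal has a genuine gap: you never establish that $W_e\widetilde C(k)$ is finite for the full range of $e$, and without this neither the finiteness of $F$ nor the finiteness of the collection $\{B_i\}$ follows. Invoking Faltings' theorem on $W_e\widetilde C$ only tells you that the rational points lie on finitely many abelian translates; to conclude finiteness you must rule out positive-dimensional translates in $W_e\widetilde C_{\bar k}$. For $e<\tfrac{d^2-4\delta}{9}$ this comes from \cref{singsmithvogt}, but for the Coppens--Kato range $e<\tfrac12(m^\ast(d-m^\ast)-\delta)$ it does not, and your sentence ``rigid rational points are the isolated $k$-points of the $0$-dimensional pieces of the Faltings decomposition'' is not an argument --- a rigid class could perfectly well sit on a positive-dimensional translate. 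The paper handles this in \cref{no-avs-plane} by a separate inductive argument: if $A\subseteq W_e\widetilde C_{\bar k}$ is a positive-dimensional translate with $e$ minimal, then by the Abramovich--Harris observation (\cref{A2}) every class in $A_2=A+A\subseteq W_{2e}\widetilde C_{\bar k}$ moves in a basepoint-free pencil of degree $2e$, and the Coppens--Kato Main Lemma maps $A_2$ injectively into $W_{md-2e}\widetilde C_{\bar k}$; the numerical inequality $m^2+\delta<\tfrac12(m(d-m)-\delta)$ for $m<\tfrac{d+\sqrt{d^2-36\delta}}{6}$ then forces $md-2e<e$, contradicting minimality. This step is the missing idea in your sketch, and your ``only finitely many $B_i$'' paragraph (bounded families, ``explicit description of base divisors'') does not substitute for it.

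A secondary point: the initial detour through \cref{maintoric} is unnecessary and buys you nothing toward (i) or (ii). The paper does not shrink the degree-$\lfloor d/2\rfloor{-}1$ curves; it applies \cref{decomposition} directly with $D=mH$, verifying the three interpolation conditions of \cref{degree-e-interpolation}: $md<d^2$, surjectivity of $H^0(\PP^2_k,mH)\to H^0(C,mH|_C)$ from the vanishing of $H^1(\PP^2_k,(m-d)H)$, and the lifting condition from the Coppens--Kato Main Lemma (\cref{mainlem}). Note in particular that Coppens--Kato gives only $H^0(\widetilde C,m\widetilde H-E)\neq 0$, not that $\widetilde P+\widetilde B$ is literally cut by a plane curve of degree $m$; the latter requires the surjectivity just mentioned, which is why the paper routes everything through \cref{decomposition} rather than arguing ad hoc as you do.
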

	
	\begin{remark}
		In our settings, the inequality $\frac{d^2-4\delta}{9} \geq \frac{1}{2}\left( \frac{d+\sqrt{d^2-36\delta}}{6}\left(d- \frac{d+\sqrt{d^2-36\delta}}{6}\right)-\delta\right)$ always holds. Thanks to the ceilings, sometimes (but not always), we get strong inequality in the other direction as well. For example, when $C$ is smooth, $\delta = 0$ and the second term equals
		\begin{equation*}
			\left\lceil\frac{d}{3}\right\rceil\left(d-\left\lceil\frac{d}{3}\right\rceil\right).
		\end{equation*}
		This quantity is always bigger than or equal to the first term $\frac{d^2}{9}$, with equality if and only if $d$ is divisible by $3$.
	\end{remark}
	
	\begin{remark}
		Since $\deg B_i < \frac{e}{2} < e$, the theorem can be used in an inductive way to better understand $B_i$. Indeed, we may decompose $B_i$ into a sum of closed points $P_1+\dots +P_\ell$. Note that $\deg P_j \leq \deg B_i<e$, hence if $P_i$ is a regular point which moves by the above theorem, we may write $P_j=C\cap D'-B_{i,j}$ for some curve $D'\subseteq \PP_k^2$ of degree $m_j$ satisfying 
		\begin{equation*}
			m_j(d-m_j)\leq \deg P_j+\delta < (m_j+1)(d-(m_j+1)),
		\end{equation*}
		and effective Weil divisor $B_{i,j}$ on $C$ with degree  $\deg B_{i,j}<\frac{\deg P_j}{2}<\frac{e}{4}$. We may repeat this process until we are left with only singular points and regular points which do not move as Cartier divisors.  
	\end{remark}
	
	\section{Background, Notations and conventions}
	Throughout this paper, all varieties are assumed to be over a fixed number field $k$ unless stated otherwise. A variety is said to be \emph{nice} if it is smooth, projective and geometrically integral. If $C$ is a nice curve, we denote by $C^{(e)}$ the $e$-th symmetric power of $C$; this space parametrises effective divisors of degree $e$ on $C$. We also denote by $W_eC$ the so-called Brill--Noether locus, which is defined as the image of $C^{(e)}$ under the Abel--Jacobi map. 
	
	Given a variety $X$, we denote by $\CDiv(X)$ the group of Cartier divisors on $X$. Similarly, $\WDiv(X)$ denotes the group of Weil divisors on $X$. Linear equivalence of Cartier divisors is denoted by $\cequiv$, and of Weil divisors is denoted by $\wequiv$. We denote the canonical map $\CDiv(X)\rightarrow \WDiv(X)$ by $D\mapsto \cyc(D)$. When $X$ is smooth, the map $\cyc$ is an isomorphism and we write $\Div(X)\colonequals \WDiv(X)$. For definitions see \cite[Section 3.2]{fulton1984intersection}. Given two Cartier divisors $D,E\in \CDiv(X)$, we write $D\geq E$ when $H^0(X,D-E)$ does not vanish. We write $D>E$ when $D\geq E$ and $D-E$ is not principal. 
	
	Let $C$ be a projective curve. A linear system $\mathfrak{d}$ on $C$ is called a $g^r_e$ if $\dim \mathfrak{d} = r$ and $\deg \mathfrak{d} = e$. We say that a Cartier divisor $D$ on $C$ \emph{moves} or \emph{moves in a pencil} if $h^0(C,D)\geq 2$. We say that $D$ is basepoint-free if the base locus $\bigcap_{D'\in |D|} \supp(D') $ is empty.
	
	The main arithmetic input of this paper comes from a conjecture of Lang, which was proven by Faltings \cite{FALTINGS1994175}. 
	\begin{theorem}
		\label{lang}
		Let $A$ be an abelian variety over a number field $k$ and let $X\subseteq A$ be a closed subvariety. Then there are finitely many abelian translates $X_i=x_i+B_i\subseteq X$ such that each $k$-rational point of $X$ lies on one of the $X_i$.
	\end{theorem}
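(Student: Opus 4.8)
Since this is precisely the Mordell--Lang conjecture for abelian varieties over a number field, the plan is not to give an independent proof but to record how the statement decomposes into the two inputs supplied by Faltings \cite{FALTINGS1994175}, one geometric and one arithmetic, the latter being the genuine obstacle. In the present paper the theorem is only ever invoked as a cited black box, so no new argument is really needed; the sketch below is how one would organise the classical proof if pressed.

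First I would isolate the geometric structure. Let $Z\subseteq X$ be the union of all translates $y+B$, where $B\subseteq A$ is a positive-dimensional abelian subvariety and $y+B\subseteq X$. By the structure theorem of Kawamata (equivalently, Ueno's fibration theorem), $Z$ is Zariski closed in $X$ and is a \emph{finite} union $Z=\bigcup_j(z_j+B_j)$ of such translates, which we may take to be the maximal ones. Discard those $z_j+B_j$ containing no $k$-rational point; for each of the remaining ones choose a rational point $x_j\in(z_j+B_j)(k)$, so that $z_j+B_j=x_j+B_j$. This produces finitely many abelian translates inside $X$ whose union contains $Z(k)$.

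The remaining assertion is that $(X\setminus Z)(k)$ is finite, i.e.\ that $X(k)$ is finite whenever $X$ contains no translate of a positive-dimensional abelian subvariety; this is the hard theorem of Faltings, proved by Vojta's method. The strategy there is to assume $X(k)$ infinite, select rational points $P_1,\dots,P_N$ whose N\'eron--Tate heights grow in a prescribed lacunary pattern with pairwise ``angles'' close to a fixed value, build by a Siegel-type pigeonhole estimate an auxiliary global section of a suitably twisted very ample line bundle on the product $X^N$ that has small height yet prescribed vanishing at $(P_1,\dots,P_N)$, and then derive a contradiction: Faltings' product theorem shows that such excess vanishing forces a product subvariety, and chasing this down produces a positive-dimensional abelian translate inside $X$. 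Granting this, each point of $(X\setminus Z)(k)$ is a translate of the trivial abelian subvariety $\{0\}$, and together with the $x_j+B_j$ above we obtain finitely many translates $X_i=x_i+B_i$ covering $X(k)$, as claimed. The one nontrivial step is the Vojta--Faltings argument in the no-translate case, and that is where all the difficulty — and all the citations — lie.
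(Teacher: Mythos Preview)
The paper does not prove this statement at all: it is simply stated as a theorem of Faltings \cite{FALTINGS1994175} and used as a black box throughout. Your proposal correctly recognises this, and your supplementary sketch of the Kawamata--Ueno reduction followed by the Vojta--Faltings argument is a reasonable outline of the classical proof, but it goes well beyond what the paper itself offers; for the purposes of matching the paper, a bare citation suffices.
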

	
	The linear equivalence classes of degree $e$ points on a nice curve $C$ are parametrised by $W_e C(k)$, which lies on an abelian variety. Hence, the following geometric theorem of Smith and Vogt \cite[Theorem 2.9]{10.1093/imrn/rnaa137} implies that in some situations the set of linear equivalence classes of degree $e$ points is finite.
	\begin{theorem}
		\label[theorem]{smithvogt}
		Let $S$ be a nice surface with irregularity $h^1(S,\OO_S)=0$ and let $C\subseteq S$ be a nice ample curve on $S$. Then for $e<\frac{C^2}{9}$, the locus $W_eC_{\overline{k}}$ contains no positive-dimensional abelian varieties.
	\end{theorem}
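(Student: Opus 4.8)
The plan is to argue by contradiction, to convert a hypothetical positive-dimensional abelian variety in $W_eC$ into moving geometry on $C$, and then to push that geometry into the surface $S$ by the Serre construction together with Bogomolov's instability theorem; the hypothesis $h^1(S,\OO_S)=0$ will be what forces the output of the instability argument to be rigid, contradicting positive-dimensionality.

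Since the assertion concerns $W_eC_{\overline k}$, I would first base change to $\overline k$ and assume $k=\overline k$, and suppose toward a contradiction that $\calA\subseteq W_eC$ is a positive-dimensional abelian variety. Being a positive-dimensional abelian subvariety of the torsor $\Pic^e C$, $\calA$ is a translate $L_0+A_0$ of an abelian subvariety $A_0\subseteq\Jac C$ with $\dim A_0\geq 1$. For every $\xi\in A_0$ the line bundle $L_0\tensor\xi$ is effective, so we obtain a positive-dimensional family $\{D_\xi\}_{\xi\in A_0}$ of effective divisors of degree $e$ on $C$; after subtracting the fixed base locus of this family — which only lowers the degrees involved — we may assume the $D_\xi$ have no common point. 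When it is convenient to have an honest moving pencil, one can upgrade this: writing a general $\xi\in A_0$ as a product $\xi_1\xi_2$ within $A_0$ in a one-parameter family of ways, the divisors $D_{\xi_1}+D_{\xi_2}$ give distinct members of a single linear system, so $W^1_{2e}C$ also contains a positive-dimensional subvariety.

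Next I would feed a general divisor $Z$ from one of these linear series — of length $\ell$ bounded in terms of $e$ — into the Serre construction on $S$. Since $C$ is ample and $\ell$ is not too large, $Z$ satisfies the Cayley--Bacharach condition with respect to $|K_S+C|$, so there is a rank-two bundle $E$ on $S$ fitting in an exact sequence $0\to\OO_S\to E\to I_{Z/S}\tensor\OO_S(C)\to 0$ with $c_1(E)=[C]$ and $c_2(E)=\ell$. The point of the numerical hypothesis $e<\tfrac{C^2}{9}$ is precisely that it yields $c_1(E)^2=C^2>4\ell=4c_2(E)$, so Bogomolov's instability theorem applies and produces a saturated sub-line-bundle $\OO_S(N)\injects E$ with $2N-C$ in the positive cone. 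Setting $M\colonequals C-N$, the tautological section $\OO_S\to E$ forces $M$ to be effective and $Z$ to lie on a curve in $|M|$, with $M\cdot C$ bounded in terms of $\ell$, hence in terms of $e$. Running this construction for the general member of each pencil in our positive-dimensional family, the destabilizing class $M$ is constrained to a bounded region of the N\'eron--Severi group (it is effective, $C-2M$ lies in the positive cone, and the Chern-class identities hold); because $h^1(S,\OO_S)=0$ we have $\Pic^0(S)=0$, so $\Pic(S)=\operatorname{NS}(S)$ is discrete and only finitely many classes $M$ can occur. Hence on a positive-dimensional subfamily, $M$ is a single fixed class.

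Finally, on that subfamily every $D_t$ (respectively $Z_t$) is dominated by $\OO_S(M)|_C$, so $\OO_C(D_t)$ equals the fixed class $\OO_S(M)|_C$ minus the class of an effective divisor $R_t$ on $C$; as $t$ ranges over a positive-dimensional base, $\OO_C(D_t)$ sweeps out a translate of $A_0$, and therefore so does $\OO_C(R_t)$, inside $W_{M\cdot C-\deg D_t}C$. Comparing degrees, either $\deg R_t=0$ — in which case $\OO_C(D_t)$ is constant, contradicting $\dim A_0\geq 1$ — or one iterates the entire argument with $M\cdot C-\deg D_t$, a positive integer, in place of $e$; since the degrees cannot decrease forever, the constant case must eventually occur, a contradiction. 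I expect the genuine difficulty to be the numerical bookkeeping: choosing the auxiliary subscheme so that $\ell$ is small enough that $C^2>4\ell$ really does follow from $e<\tfrac{C^2}{9}$ (this is where the constant $9$, rather than something larger, is pinned down), verifying the Cayley--Bacharach hypothesis so that $E$ is genuinely locally free, controlling $M\cdot C$ and proving finiteness of the possible destabilizing classes $M$ from the instability inequality together with $h^1(S,\OO_S)=0$, and — the most delicate point — ensuring the iteration in the last step terminates, equivalently ruling out the case where $M\cdot C$ is so large that $W_{M\cdot C-\deg D_t}C$ falls outside the range in which the argument can be rerun.
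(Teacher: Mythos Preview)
The paper does not prove this theorem; it is quoted from Smith and Vogt \cite[Theorem 2.9]{10.1093/imrn/rnaa137} and used as a black box (see the sentence preceding the statement, and the introduction where the authors describe Smith--Vogt as ``using ideas from Reider's method''). So there is no ``paper's own proof'' to compare against.

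That said, your sketch is recognisably the Smith--Vogt argument: the doubling trick to pass from an abelian translate in $W_eC$ to basepoint-free pencils (this is exactly the content of \cref{A2} in the present paper, attributed there to Abramovich--Harris), the Serre/elementary-transformation construction of a rank-two bundle with $c_1=[C]$ and small $c_2$, Bogomolov instability, and the observation that $h^1(S,\OO_S)=0$ forces $\Pic(S)$ to be discrete so only finitely many destabilising classes can occur. The overall architecture is correct.

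Where your write-up remains a sketch is precisely where you flag it. The constant $9$ is pinned down by the \emph{iteration}, not by the single application of Bogomolov: after doubling one has $\ell\le 2e$, and Bogomolov alone only needs $C^2>8e$; the sharper bound $e<C^2/9$ is what guarantees, via the Hodge index theorem and the instability inequalities, that the residual degree $M\cdot C-2e$ is strictly less than $e$, so that the descent terminates. You correctly identify this as ``the most delicate point'' but do not actually carry it out, and as written your final paragraph is circular (you iterate ``with $M\cdot C-\deg D_t$ in place of $e$'' without having established that this new quantity is again below $C^2/9$). Likewise, the Cayley--Bacharach verification is not automatic for arbitrary $Z$ of the given length; in Smith--Vogt one arranges this by a careful choice of the auxiliary subscheme, and that step deserves more than a clause. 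So: right strategy, but the numerics that justify the specific constant $9$ and the termination of the descent are asserted rather than proved.
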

	
	As a corollary, we obtain an analogous statement for singular curves, which we will use later.
	\begin{corollary}
		\label[corollary]{singsmithvogt}
		Let $S$ be a nice surface with irregularity $h^1(S,\OO_S)=0$ and let $C\subseteq S$ be a geometrically integral closed curve on $S$ with simple singularities\footnote{A singularity is said to be simple if it is resolved by a single blow-up.}. Assume that the normalisation $\widetilde{C}$ is ample on the blowup $\Bl(S)$ of $S$ along the singular points of $C$. Denote by $\delta_1,\dots, \delta_n$ the multiplicities of the singular points on $C$. Then for 
		\begin{equation*}
			e<\frac{C^2-\sum_{i=1}^{n}\delta_i^2}{9},
		\end{equation*}
		the locus $W_e\widetilde{C}_{\overline{k}}$ contains no positive-dimensional abelian varieties. In particular, the set of rational points $W_e\widetilde{C}(k)$ is finite.
	\end{corollary}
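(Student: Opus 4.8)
The plan is to reduce the statement to Theorem \ref{smithvogt} applied to the normalisation $\widetilde{C}$ sitting on the blowup $\Bl(S)$, and then to control how the numerical invariants change under blowing up. First I would set $\pi\colon \Bl(S)\to S$ for the blowup along the (finitely many) singular points $x_1,\dots,x_n$ of $C$, with exceptional divisors $E_1,\dots,E_n$. Since the singularities are simple, each is resolved by a single blow-up, so the strict transform of $C$ in $\Bl(S)$ is exactly the normalisation $\widetilde{C}$, and one has the standard relation
\begin{equation*}
\widetilde{C} \cequiv \pi^{*}C - \sum_{i=1}^{n} \delta_i E_i
\end{equation*}
in $\Pic(\Bl(S))$, where $\delta_i$ is the multiplicity of $C$ at $x_i$. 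Next I would check that $\Bl(S)$ is again a nice surface and that its irregularity vanishes: blowing up a smooth point does not change $h^1(\OO)$ (the blow-up of a smooth surface at a point has the same Hodge numbers $h^{0,1}$, since $\pi_*\OO_{\Bl(S)}=\OO_S$ and $R^1\pi_*\OO_{\Bl(S)}=0$), so $h^1(\Bl(S),\OO_{\Bl(S)})=h^1(S,\OO_S)=0$.

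With these in place, the hypothesis that $\widetilde{C}$ is ample on $\Bl(S)$ lets me apply Theorem \ref{smithvogt} verbatim to the nice ample curve $\widetilde{C}\subseteq\Bl(S)$: for $e<\frac{\widetilde{C}^{\,2}}{9}$, the Brill--Noether locus $W_e\widetilde{C}_{\overline{k}}$ contains no positive-dimensional abelian subvariety. It remains to compute the self-intersection $\widetilde{C}^{\,2}$ on $\Bl(S)$ in terms of data on $S$. Using the projection formula, $\pi^{*}C\cdot E_i=0$, and $E_i\cdot E_j=-\delta_{ij}$, one gets
\begin{equation*}
\widetilde{C}^{\,2} \;=\; (\pi^{*}C)^2 - 2\sum_i \delta_i\,(\pi^{*}C\cdot E_i) + \sum_{i,j}\delta_i\delta_j\,(E_i\cdot E_j) \;=\; C^2 - \sum_{i=1}^{n}\delta_i^{2},
\end{equation*}
so the bound $e<\frac{\widetilde{C}^{\,2}}{9}$ is exactly the bound $e<\frac{C^2-\sum\delta_i^2}{9}$ in the statement. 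This proves the first assertion.

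For the final sentence, I would invoke Theorem \ref{lang} (Faltings): $W_e\widetilde{C}$ is a closed subvariety of the Jacobian (an abelian variety), so its $k$-rational points are covered by finitely many abelian translates $x_i+B_i\subseteq W_e\widetilde{C}$. Each $B_i$ is an abelian subvariety contained in $W_e\widetilde{C}_{\overline{k}}$, hence is $0$-dimensional by the first part, so each translate is a single point; therefore $W_e\widetilde{C}(k)$ is finite. The only genuinely delicate point is making sure the strict transform really is smooth (i.e. equals the normalisation) and that the multiplicity--based linear equivalence $\widetilde{C}\cequiv\pi^{*}C-\sum\delta_iE_i$ holds as stated; this is where the hypothesis of \emph{simple} singularities — each resolved by a single blow-up — is used, and I would spell out that for such singularities the strict transform meets each $E_i$ transversally in $\delta_i$ distinct smooth points and is itself smooth there. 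Everything else is a routine intersection-theory bookkeeping on the blow-up together with the cited theorems.
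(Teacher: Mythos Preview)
Your proposal is correct and follows exactly the paper's approach: reduce to Theorem~\ref{smithvogt} on the blowup, use that irregularity is preserved under blowing up smooth points (the paper cites \cite[Chapter~V, Proposition~3.4]{hartshorne1977algebraic} for this), compute $\widetilde{C}^{\,2}=C^2-\sum_i\delta_i^2$, and deduce finiteness of $W_e\widetilde{C}(k)$ from Theorem~\ref{lang}. The paper's proof is simply a terser version of yours; your only inessential slip is the parenthetical claim that the strict transform meets each $E_i$ transversally in $\delta_i$ distinct points---this fails e.g.\ for an ordinary cusp---but the linear equivalence $\widetilde{C}\cequiv\pi^{*}C-\sum_i\delta_iE_i$ and hence the self-intersection formula hold regardless, so nothing in the argument depends on it.
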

	\begin{proof}
		By \cite[Chapter V Proposition 3.4]{hartshorne1977algebraic}, the irregularity of $\Bl(S)$ is the same as the irregularity of $S$. Note that the self intersection number of $\widetilde{C}$ on $\Bl(S)$ is $\widetilde{C}^2=C^2-\sum_{i=1}^{n}\delta_i^2$. Applying \cref{smithvogt} to $\widetilde{C}$ on $\Bl(S)$, we obtain the desired result.
		
		The finiteness of $W_e\widetilde{C}(k)$ follows from Lang's conjecture (\cref{lang}).
	\end{proof}
	
	\section{Interpolations of degree $e$ points}
	
	In this section, we prove that when a curve in our ambient surface $D\subseteq S$ satisfies some nice properties, it induces a decomposition on $|C|_e$; this will be key to proving our main theorems. The results are mostly clear when $C$ is a nice curve; we carefully treat the singular case as well. Our main goal is to cut out degree $e$ points on $C$ using curves on $S$. We begin with a definition which singles out a minimalist set of algebro-geometric conditions needed to induce the decompositions in our main theorems.
	
	\begin{definition}
		\label[definition]{degree-e-interpolation}
		Let $C\subseteq S$ be a geometrically integral closed curve on a nice surface, and let $e$ be a positive integer. We say that a closed curve $D\subseteq S$ is \emph{an interpolation of degree $e$ points on $C$} if the following properties hold.
		\begin{enumerate}
			\item\label{resisinter} $C\cdot D < C^2$.
			
			\item\label{surjective} The map $H^0(S,D)\rightarrow H^0(C,D|_C)$ is surjective.
			
			\item\label{lift} For all degree $e$ Cartier divisors $E\in \CDiv(C)$ moving in a basepoint-free pencil, the divisor $D|_C-E$ has global sections.
		\end{enumerate}
	\end{definition}
	
	Next, we briefly discuss restrictions of divisors on singular curves; for a more comprehensive discussion, see \cite[Subsection 7.1.3]{liu2002algebraic}. Let $C\subseteq S$ be an integral closed curve in a nice surface, and let $\iota:C\rightarrow S$ be the inclusion. Let $D\subseteq S$ be a closed curve. As $S$ is nice, the curve $D$ defines a unique Cartier divisor on $S$. We can always define a restriction of $D$ to $C$ as a Cartier divisor on $C$ representing the divisor class of the pullback $\iota^*[D]$. This choice is non-canonical, which can cause issues. When the scheme $C\cap D$ is finite, we can define the restriction $D|_C\in \CDiv(C)$ in the following canonical way. Let $\{(U_i, f_i)\}_{i\in I}$ be local equations defining $D$ in $S$ as an effective Cartier divisor. We define
	\begin{equation*}
		D|_C \colonequals \{(U_i\cap C,f_{i}|_C)\}_{i\in I}.
	\end{equation*}
	This is indeed a Cartier divisor since $D$ does not contain $C$. We note that $D|_C$ is effective. One can define an isomorphism of $\OO_C$-modules $\iota^{*}\OO_S(D)\rightarrow \OO_C(D|_C)$ which makes the map of $\OO_S$-modules $\OO_S(D)\rightarrow \iota^{*}\OO_S(D)$ the usual restriction $f\mapsto f|_C$. A direct calculation shows that the associated Weil divisor $\cyc(D|_C)$ is the scheme-theoretic intersection $C\cap D$. As long as the scheme $C\cap D$ is finite, we will use $D|_C$ to denote the canonical definition of the restriction.
	
	With that canonical construction in mind, we prove the following lemma, which will allow us to use properties \eqref{resisinter}, \eqref{surjective} when showing that interpolation of degree $e$ points induces a decomposition of $|C|_e$.
	
	\begin{lemma}
		\label[lemma]{varying-intersection}
		Let $C\subseteq S$ be an integral closed curve in a nice surface. Assume that $C$ is ample on $S$. Let $D\subseteq S$ be a closed curve such that $D\cdot C<C^2$. Then 
		\begin{enumerate}
			\item\label{finite-intersection} For all curves $D'\subseteq S$ linearly equivalent to $D$, the scheme $C\cap D$ is finite. 
			
			\item\label{varying-cyc} Assume furthermore that the map on global sections $H^0(S,D)\rightarrow H^0(C,D|_C)$ is surjective. Then for all effective Cartier divisors $E\in \CDiv(C)$ linearly equivalent to $D|_C$, there exists a curve $D'\subseteq S$ linearly equivalent to $D$ such that $\cyc(E)=C\cap D'$.
		\end{enumerate}
	\end{lemma}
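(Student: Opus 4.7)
The plan is to handle (1) by a direct intersection-theoretic contradiction, and then to bootstrap (1) for (2) by lifting a defining section for $E$ from $C$ to $S$ via the surjectivity hypothesis.

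For (1), I would argue by contradiction: if $C \subseteq D'$ for some $D' \sim D$, then since $C$ is integral it appears with positive multiplicity as a component of $D'$, so I can write $D' = C + D''$ with $D''$ an effective divisor on $S$. Intersecting with $C$ gives $D \cdot C = D' \cdot C = C^2 + D'' \cdot C$, and ampleness of $C$ (via Nakai--Moishezon) forces $D'' \cdot C \geq 0$; this yields $D \cdot C \geq C^2$, contradicting the hypothesis $D \cdot C < C^2$. Hence $C \not\subseteq D'$, and because $C$ is an integral curve and $D'$ is an effective Cartier divisor on the surface $S$ not containing $C$, the scheme-theoretic intersection $C \cap D'$ is zero-dimensional, hence finite.

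For (2), given an effective Cartier divisor $E \in \CDiv(C)$ with $E \sim D|_C$, I would encode $E$ as a global section $s \in H^0(C, \OO_C(D|_C))$ via the isomorphism $\OO_C(E) \cong \OO_C(D|_C)$, chosen so that $\divv(s) = E$. By the surjectivity hypothesis, I can lift $s$ to a section $t \in H^0(S, \OO_S(D))$ with $t|_C = s$ under the restriction map $\OO_S(D) \to \iota^*\OO_S(D) \cong \OO_C(D|_C)$ recorded just before the lemma. Setting $D' \colonequals \divv(t)$ produces an effective Cartier divisor on $S$ with $D' \sim D$. Part (1) applied to $D'$ ensures that $C \cap D'$ is finite, so the canonical restriction $D'|_C$ from the paragraph before the lemma is well defined, and by construction of the local equations of $D'$ along $C$ we get $D'|_C = \divv(t|_C) = \divv(s) = E$ as Cartier divisors on $C$. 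Passing to cycles yields $\cyc(E) = \cyc(D'|_C) = C \cap D'$, as required.

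The only real subtlety lies in part (2), and it is bookkeeping rather than mathematics: one must reconcile the canonical Cartier restriction $D'|_C$ with the divisor of the restricted section $t|_C$ on a curve $C$ that may be singular. This compatibility rests on the identification $\iota^*\OO_S(D) \cong \OO_C(D|_C)$ described in the paragraph preceding the lemma and is valid precisely because part (1) guarantees $C \not\subseteq D'$. Once those points are in place the argument reduces to unwinding definitions, so I do not expect any further obstacles.
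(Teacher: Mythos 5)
Your proof is correct and follows essentially the same route as the paper. Part (1) is verbatim the paper's contradiction via $D' = C + R$ and ampleness; part (2) matches the paper's argument, with the only cosmetic difference that you set $D' = \divv(t)$ for a lifted section $t$ of $\OO_S(D)$, whereas the paper writes the same divisor as $D + \divv(g)$ with $g \in H^0(S,D)$ viewed as a rational function—these are the same $D'$ under the standard identification, and both then read off $D'|_C = E$ from the local equations and invoke $\cyc(D'|_C) = C \cap D'$.
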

	\begin{proof}
		Let $D'\subseteq S$ be a curve linearly equivalent to $D$. Suppose by way of contradiction that $C$ is an irreducible component of the curve $D'$. As divisors, we may write $D'=C+R$ for some effective divisor $R\in \Div(S)$. Since $C$ is ample, $R\cdot C\geq0$ and hence
		\begin{equation*}
			C^2 > D\cdot C=(C+R)\cdot C \geq C^2
		\end{equation*}
		This is a contradiction, so $C$ is not an irreducible factor of $D'$. Hence, the scheme $C\cap D'$ is finite, which proves \eqref{finite-intersection}.
		
		We now prove \eqref{varying-cyc}. Let $E\in \CDiv(C)$ be an effective Cartier divisor linearly equivalent to $D|_C$. Let $f\in H^0(C,D|_C)$ be such that 
		\begin{equation*}
			E=D|_C+\divv(f).
		\end{equation*}
		Since the map of global sections $H^0(S,D)\rightarrow H^0(C,D|_C)$ is surjective, there exists $g\in H^0(S,D)$ such that $f=g|_C$. Define $D'\colonequals D+\divv(g)$; this is an effective Cartier divisor on $S$ and hence a closed curve. By \cref{finite-intersection} and the above discussion, $D'|_C$ is a Cartier divisor with associated Weil divisor $\cyc(D'|_C)=C\cap D'$. By the definition of the restriction, we see that
		\begin{equation*}
			D'|_C = D|_C + \divv(g|_C) = E.
		\end{equation*} 
		Thus $\cyc(E) = C\cap D'$ as claimed.
	\end{proof}
	
	We also prove the following lemma, asserting that closed points which move are in fact basepoint-free. This will allow us to use property \eqref{lift} in the upcoming proof that interpolation of degree $e$ points induces a decomposition of $|C|_e$.
	
	\begin{lemma}
		\label[lemma]{basepoint-free}
		Let $C$ be a geometrically integral proper curve. Let $P\in C_\text{reg}$ be a regular closed point on $C$. Treated as a Cartier divisor on $C$, assume that the divisor $P$ moves in a pencil. Then $P$ moves in a basepoint-free pencil.
	\end{lemma}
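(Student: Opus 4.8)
The plan is to prove that the complete linear system $|P|$ is basepoint-free; since the hypothesis ``$P$ moves in a pencil'' is exactly $h^0(C,\mathcal{O}_C(P))\geq 2$, this is precisely the assertion that $P$ moves in a basepoint-free pencil. The starting point is the observation that $P$ is itself a member of $|P|$, so the base locus $\bigcap_{D'\in|P|}\supp(D')$ is contained in $\supp(P)=\{P\}$; hence the point $P$ is the only possible base point, and it suffices to rule it out. Note that this already tells us the singularities of $C$ will play no role: the only local analysis needed takes place at $P$, which is a regular point, so $\mathcal{O}_{C,P}$ is a discrete valuation ring and we may use an order function $\ord_P$ and a uniformizer exactly as in the smooth case.

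Suppose, for contradiction, that $P$ lies in the base locus. Then every effective Cartier divisor $E$ on $C$ with $E\cequiv P$ satisfies $\ord_P(E)\geq 1$. Fix such an $E$ and consider the Cartier divisor $E-P$: choosing a local equation $f$ for $E$ at $P$ (so $\ord_P(f)\geq 1$) and a uniformizer $t$ for $P$, the divisor $E-P$ has local equation $f/t\in\mathcal{O}_{C,P}$ at $P$ and local equation equal to that of $E$ away from $P$, so $E-P$ is an effective Cartier divisor, and it is linearly equivalent to $0$. Now I would invoke the one structural fact we need: since $C$ is geometrically integral and proper over $k$ we have $H^0(C,\mathcal{O}_C)=k$, and therefore an effective Cartier divisor linearly equivalent to $0$ must be $0$ --- its canonical section becomes, under a trivialisation $\mathcal{O}_C(E-P)\isom\mathcal{O}_C$, a nonzero element of $k$, whose divisor is trivial. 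Hence $E=P$, so $|P|$ consists of the single member $P$ and $h^0(C,\mathcal{O}_C(P))=1$, contradicting the hypothesis. Therefore $|P|$ is basepoint-free.

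The main thing that requires care --- the ``obstacle'', such as it is --- is the bookkeeping between Cartier and Weil divisors on the possibly singular curve $C$: one must verify that $E-P$ is effective as a \emph{Cartier} divisor rather than merely as a cycle, which is exactly where the regularity of $P$ enters, and one must apply $H^0(C,\mathcal{O}_C)=k$ in the form ``an effective Cartier divisor linearly equivalent to $0$ is $0$''. An equivalent and slightly slicker route bypasses divisor manipulations: from the short exact sequence $0\to\mathcal{O}_C\to\mathcal{O}_C(P)\to\mathcal{Q}\to 0$, where $\mathcal{Q}$ is the skyscraper sheaf at $P$ with stalk $\mathcal{O}_C(P)_P/\mathfrak{m}_P\mathcal{O}_C(P)_P\cong k(P)$, the inequality $h^0(C,\mathcal{O}_C(P))\geq 2>1=h^0(C,\mathcal{O}_C)$ forces the map $H^0(C,\mathcal{O}_C(P))\to k(P)$ to be nonzero; since this stalk is a one-dimensional $k(P)$-vector space, one \emph{nonzero} global section (not a surjection onto the $k$-vector space $k(P)$) already generates $\mathcal{O}_C(P)_P$ by Nakayama, so $\mathcal{O}_C(P)$ is globally generated at $P$, and trivially so elsewhere via the section $1$. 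In either form the conclusion is that $|P|$ is basepoint-free, hence $P$ moves in a basepoint-free pencil.
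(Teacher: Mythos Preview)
Your proof is correct and follows essentially the same approach as the paper: both arguments reduce to the observation that the base locus of $|P|$ is contained in $\{P\}$, then use regularity at $P$ together with $H^0(C,\OO_C)=k$ to rule out $P$ as a base point. The paper picks a non-constant $f\in H^0(C,P)$ and shows directly that $\ord_P(f)=-1$, whereas you phrase this as the contrapositive (if $P$ were a base point then every $E\in|P|$ would satisfy $E-P\geq 0$ and hence $E=P$); these are the same argument, and your alternative exact-sequence formulation is likewise equivalent.
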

	\begin{proof}
		Let $f\in H^0(C,P)$ be a non-constant global section. In particular $\ord_P(f)\geq -1$. Suppose by way of contradiction that $\ord_P(f)\geq 0$. Since $P$ is a regular point on $C$, we conclude that $f\in \OO_C(C)$ is a regular global section. But $C$ is geometrically integral and proper, so $\OO_C(C)=k$ and $f$ must be constant. This is a contradiction, so $\ord_P(f)=-1$. Hence $\ord_P(P+\divv(f))=0$ and as $P$ is a regular point on $C$ we conclude that 
		\begin{equation*}
			P\notin\supp(P+\divv(f)).
		\end{equation*}
		But the base locus of $P$ is contained in $\supp(P)=\{P\}$ and in $\supp(P+\divv(f))$, so it must be empty. Thus $P$ moves in a basepoint-free pencil.
	\end{proof}
	
	Now we are ready to state and prove a proposition which justifies \cref{degree-e-interpolation}, and will be key to proving our main theorems.
		
	\begin{proposition}
		\label[proposition]{decomposition}
		Let $C\subseteq S$ be a geometrically integral closed curve on a nice surface. Assume that $C$ is ample on $S$. Let $\pi: \widetilde{C}\rightarrow C$ be the normalisation of $C$. Let $e$ be a positive integer such that the set of rational points on the Brill--Noether locus $W_e \widetilde{C}$ is finite. Let $D\subseteq S$ be an interpolation of degree $e$ points on $C$ (in the sense of \cref{degree-e-interpolation}). Then the set $|C|_e$ of degree $e$ points on $C$ admits a decomposition
		\begin{equation*}
			|C|_e=F\cup \bigcup_{i=1}^m S_i,
		\end{equation*} 
		where $F$ is a finite set, and for each $i$, the set $S_i$ is the set of degree $e$ points obtained by intersecting $C$ with the linear system of curves linearly equivalent to $D$ that pass through a certain effective Weil divisor $B_i\subseteq C$, that is, 
		\begin{equation*}
			S_i = \left\{P\in|C|_e \text{ }\bigg|\text{ } \exists D'\in |D|:P=C\cap D'-B_i\right\}.
		\end{equation*}
		Furthermore, the finite set $F$ is the set of all singular points and regular points of degree $e$ which do not move as Cartier divisors.
	\end{proposition}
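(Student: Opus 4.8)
The plan is to split the degree $e$ points of $C$ into those that ``move'' and those that do not, show that the non-moving ones together with the singular points form a finite set, and then organize the moving ones into the families $S_i$ by sorting on the base locus of the pullback of $D$. First I would set $F$ to be the set of all singular closed points of degree $e$ together with all regular closed points $P$ of degree $e$ which, viewed as Cartier divisors on $C$, do not move in a pencil (i.e. $h^0(C,P)\le 1$). That $F$ is finite needs an argument: a regular point $P$ that does move can be pulled back to an effective degree $e$ divisor $\widetilde P$ on the normalisation $\widetilde C$ (since $P$ is regular, $\pi$ is an isomorphism near $P$), and $\widetilde P$ still moves on $\widetilde C$; hence its class lies in a positive-dimensional fibre of the Abel--Jacobi map, so it contributes a positive-dimensional subvariety of $W_e\widetilde C$. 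Meanwhile the regular non-moving points of a given degree are rigid as Cartier divisors, and I would argue — using that the normalisation is an isomorphism away from the finitely many singular points, and that a degree $e$ point determines a rational point of $W_e\widetilde C$ — that there are only finitely many of them, because $W_e\widetilde C(k)$ is finite by hypothesis and each non-moving class has essentially a unique effective representative. The singular points are finite in number trivially. So $F$ is finite.

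Next I would handle a moving regular point $P$ of degree $e$. By \cref{basepoint-free}, $P$ moves in a \emph{basepoint-free} pencil, so by interpolation property \eqref{lift} the divisor $D|_C - P$ has a global section; choose an effective Cartier divisor $B \cequiv D|_C - P$ on $C$ with $B\ge 0$. Then $B + P \cequiv D|_C$ is an effective Cartier divisor linearly equivalent to $D|_C$, and by \cref{varying-intersection}\eqref{varying-cyc} (whose hypotheses hold: $C$ is ample, $C\cdot D<C^2$ by \eqref{resisinter}, and $H^0(S,D)\to H^0(C,D|_C)$ is surjective by \eqref{surjective}) there is a curve $D'\in|D|$ with $\cyc(B+P) = C\cap D'$, i.e. $P = C\cap D' - B$ as Weil divisors. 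Thus $P\in S_i$ where $B_i := B$. Conversely any $P = C\cap D' - B_i$ of degree $e$ appearing this way is a genuine degree $e$ point and is either in $F$ or (being obtained as such an intersection) lands in the corresponding $S_i$, so $|C|_e = F\cup\bigcup_i S_i$ once we know the index set is finite.

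The remaining point — and I expect this to be the main obstacle — is that the $B_i$ range over an \emph{a priori} infinite set of effective divisors on $C$ of degree $C\cdot D - e$, so one must show only finitely many of them actually occur for degree $e$ points. Here is where finiteness of $W_e\widetilde C(k)$ re-enters. The key observation is that the linear equivalence class of $B_i$ on $C$ is constrained: if $P, Q$ are two moving regular degree $e$ points with $B_i(P) \cequiv B_i(Q)$ on $C$, then $P \cequiv Q$ (both are $\cequiv D|_C - B_i$), so $P$ and $Q$ determine the same class in the Picard group, hence (after pullback) the same rational point of $W_e\widetilde C$. Conversely one shows a point of $W_e\widetilde C(k)$ pins down $B_i$ up to finitely much ambiguity (the fibre of Abel--Jacobi through a moving class is a positive-dimensional projective space, but we only need that the \emph{set of linear equivalence classes} $\{[B_i]\}$ is finite, which follows since $[B_i] = [D|_C] - [P]$ and $[P]$ ranges over the finite set $W_e\widetilde C(k)$ pulled back). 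So the classes $[B_i]$ form a finite set; grouping the moving points by the class of $B_i$ and then, within each class, by an actual choice of effective representative $B_i$, one checks the description of $S_i$ is unchanged if two representatives are linearly equivalent (because $|D|$ absorbs the principal divisor, exactly as in the proof of \cref{varying-intersection}\eqref{varying-cyc}). Hence finitely many $S_i$ suffice, completing the decomposition. The only genuinely delicate bookkeeping is making sure the pullback-to-$\widetilde C$ step is harmless for regular points and that ``does not move as a Cartier divisor'' is equivalent on $C$ and on $\widetilde C$ for such points, which is where I would spend the most care.
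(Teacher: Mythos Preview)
Your proposal is correct and follows essentially the same approach as the paper. The only organizational difference is that the paper first fixes finitely many representatives $P_1,\dots,P_m$ of the Weil-linear-equivalence classes of moving regular degree $e$ points (finiteness coming directly from $W_e\widetilde C(k)$ finite), defines $S_i$ as the class of $P_i$, and then produces a single $B_i$ from $P_i$ via property~\eqref{lift}; you instead produce a $B$ for each moving $P$ and then argue afterwards that only finitely many classes $[B]=[D|_C]-[P]$ occur and that $S_i$ depends only on $[B_i]$. These are equivalent, and the delicate bookkeeping you flag (equivalence of ``moves on $C$'' and ``moves on $\widetilde C$'' for regular points) is exactly the identification $H^0(\widetilde C,\widetilde P)\cong H^0(C,\pi_*\OO_{\widetilde C}(\widetilde P))=H^0(C,P)$ that the paper uses.
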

	\begin{proof}
		First, as the set $W_e \widetilde{C}(k)$ is finite, the subset
		\begin{equation*}
			\widetilde{F}\colonequals \left\{P\in |\widetilde{C}|_e \text{ }\bigg|\text{ } h^0(\widetilde{C},P)=1\right\}
		\end{equation*}
		of degree $e$ points on $\widetilde{C}$ not moving in a pencil is finite. Let $\Delta \colonequals C\setminus C_{\text{reg}}$ be the finite set of singular points on $C$ and define $F\colonequals |\pi(\widetilde{F})\cup \Delta|_e$. Then the set $F$ is a finite set of degree $e$ points on $C$ containing all the singular degree $e$ points. 
		
		Let $\widetilde{P}_1,\dots ,\widetilde{P}_m\in |\pi^{-1}(C_{\text{reg}})|_e$ be a collection of representatives of every linear equivalence class of degree $e$ points $\widetilde{P}$ such that $\pi(\widetilde{P})\in |C|$ is regular and $\widetilde{P}$ is moving in a pencil. This set is finite by the assumption that $W_e C(k)$ is finite. As the restriction $\pi|_{\pi^{-1}(C_{\text{reg}})}: \pi^{-1}(C_{\text{reg}})\rightarrow C_{\text{reg}}$ is an isomorphism, the points $P_i \colonequals \pi(\widetilde{P}_i)$ are regular degree $e$ points on $C$. Denote
		\begin{equation*}
			S_i=\left\{P\in |C_\text{reg}|_e \text{ }\bigg|\text{ } P\wequiv P_i\right\}
		\end{equation*} 
		the class of degree $e$ points linearly equivalent to $P_i$ as Weil divisors on $C$. We claim that $|C|_e = F\cup \bigcup_{i=1}^m S_i$. Indeed, choose $P\in |C|_e\setminus F$; we have to show that $P\in S_i$ for some $i$. Then $P$ is a regular point on $C$ such that the pullback $\widetilde{P}\colonequals\pi^{*}(P)$ moves. Hence $\widetilde{P}\equiv \widetilde{P}_i$ for some $i$. As rational equivalence is preserved under the proper pushforward $\pi_{*}:\WDiv(\widetilde{C})\rightarrow \WDiv(C)$, we see that $P\wequiv P_i$ and hence $P\in S_i$ as claimed.
		
		The Cartier divisors $P_i$ move in a pencil, since $H^0(\widetilde{C}, \widetilde{P}_i) \cong H^0(C,\pi_{*}\OO_{\widetilde{C}}(\widetilde{P}_i)) = H^0(C,P_i)$. Hence, by \cref{basepoint-free}, the Cartier divisors $P_i$ move in a basepoint-free pencil. Therefore, by the assumption \eqref{lift} in \cref{degree-e-interpolation}, there exists an effective Cartier divisor $B'_i\in \CDiv(C)$ such that 
		\begin{equation*}
			P_i\cequiv D|_C-B'_i.
		\end{equation*}
		Denote by $B_i=\cyc(B'_i)$ the associated effective Weil divisor in $C$.
		
		Let $P\in S_i$ be a point. The Weil divisor $P+B_i$ is effective and linearly equivalent to $\cyc(D|_C)$. Hence there exists an effective Cartier divisor $E\in \CDiv(C)$ linearly equivalent to $D|_C$ such that 
		\begin{equation*}
			P+B_i=\cyc(E).
		\end{equation*}
		But then, by assumptions \eqref{resisinter}, \eqref{surjective} in \cref{degree-e-interpolation} and \cref{varying-intersection}, there exists some curve $D'\subseteq S$ linearly equivalent to $D$ such that $\cyc(E)=C\cap D'$. Hence $P=C\cap D'-B_i$. Thus 
		\begin{equation*}
			S_i = \left\{P\in|C|_e \text{ }\bigg|\text{ } \exists D'\in |D|:P=C\cap D'-B_i\right\}.
		\end{equation*} 
		as claimed.
	\end{proof}
	
	\section{Toric surfaces and an inequality of Martens}
	
	Before we prove \cref{maintoric}, we review some properties of toric surfaces. All the tools needed can be found in \cite{cox2011toric}. Let $S$ be a nice toric surface. Denote by $D_1,\dots, D_n\subseteq S$ the prime toric divisors. By \cite[Theorem 4.1.3]{cox2011toric} the group $\Pic(S)=\text{NS}(S)$ is generated by $D_1,\dots, D_n$, but these generators might have relations. Denote by $K=K_S$ the canonical divisor of $S$. By \cite[Theorem 8.2.3]{cox2011toric}, we have
	\begin{equation*}
		K\equiv -\sum_{i=1}^{n} D_i.
	\end{equation*} 
	Recall that a toric $\Q$-divisor $D\in \Q \otimes \Div(S)$ induces a polytope $P_D$ in $\R^2$ which is full-dimensional when $D$ is ample, for example by \cite[Theorem 7.2.10]{cox2011toric}. For a general $\Q$-divisor $D\in \Q \otimes \Div(S)$ (not necessarily toric), we use the notation $\lfloor D\rfloor$ to denote the toric divisor on $S$ which is constructed as follows. Choose a toric representation $D\equiv \sum_{i=1}^n a_iD_i$ and define 
	\begin{equation*}
		\lfloor D\rfloor = \sum_{i=1}^n \lfloor a_i\rfloor D_i
	\end{equation*}
	and similarly, $\lceil D\rceil = \sum_{i=1}^n \lceil a_i\rceil D_i$. When $D\subseteq S$ is a closed curve, by \cite[Corollary 6.3.21]{cox2011toric} there exists such toric representation $D\equiv \sum_{i=1}^n a_iD_i$ where the coefficients $a_i\geq0$ are non-negative. Thus, as a convention, when $D\in \Q \otimes \Div(S)$ is a rescaling of a closed curve in $S$, we always choose such non-negative coefficients for our toric representation of $D$. The definition of $\lfloor D\rfloor$ and $\lceil D\rceil$ depends on the choice of toric representation, which is fine for our needs. We will use the following vanishing theorem:
	
	\begin{theorem}[\cite{cox2011toric}, Theorem 9.3.5]
		\label{vanishing}
		Let $S$ be a nice toric surface. Let $D\in \Q \otimes \Div(S)$ be a nef $\Q$-divisor on $S$. Then
		\begin{enumerate}
			\item For all $i>0$ the cohomology $H^i(S,\lfloor D\rfloor)=0$ vanishes.
			\item For all $i\neq \dim P_D$ the cohomology $H^i(S,-\lceil D\rceil)=0$ vanishes. In particular when $D$ is ample, for $i=0,1$ the cohomologies $H^i(S,-\lceil D\rceil)=0$ vanishes.
		\end{enumerate}
	\end{theorem}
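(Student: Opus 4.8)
Since this statement is quoted verbatim from \cite[Theorem 9.3.5]{cox2011toric}, one would normally just cite it; here is the shape of the proof one would give. The plan is to reduce both parts to the two classical toric vanishing theorems for \emph{integral} divisors (Demazure Vanishing, \cite[Theorem 9.2.3]{cox2011toric}, and Batyrev--Borisov Vanishing, \cite[Theorem 9.2.7]{cox2011toric}) via a single elementary rounding observation. Since $S$ is smooth, every element of $\Q\otimes\Div(S)$ is $\Q$-Cartier and the roundings $\lfloor D\rfloor$, $\lceil D\rceil$ are honest torus-invariant (Weil $=$ Cartier) divisors. Fix the chosen toric representation $D\equiv\sum_{i=1}^n a_iD_i$, let $M$ be the character lattice of the torus, and let $u_i$ denote the ray generator attached to $D_i$. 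The only arithmetic input is that for $m\in M$ the \emph{integer} $\langle m,u_i\rangle$ satisfies $\langle m,u_i\rangle\geq -a_i \iff \langle m,u_i\rangle\geq -\lfloor a_i\rfloor$ and $\langle m,u_i\rangle\geq a_i \iff \langle m,u_i\rangle\geq \lceil a_i\rceil$. Hence $P_{\lfloor D\rfloor}\cap M=P_D\cap M$ and $P_{-\lceil D\rceil}\cap M=P_{-D}\cap M$, which already disposes of the degree-$0$ parts of both statements.

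For the higher cohomology I would invoke Demazure's graded computation (\cite[Theorem 9.1.3]{cox2011toric}): for a torus-invariant Weil divisor $E=\sum b_iD_i$ on the complete toric surface $S$ and $m\in M$, one has $H^p(S,\OO_S(E))_m\cong\widetilde H^{p-1}(V_{E,m};\C)$, where $V_{E,m}$ is an explicit subcomplex of the unit sphere in $N_\R$ determined solely by the set of rays $u_i$ with $\langle m,u_i\rangle<-b_i$. By the observation of the first paragraph, for $E=\lfloor D\rfloor$ the bad rays at $m$ are exactly those with $\langle m,u_i\rangle<-a_i$, so $V_{\lfloor D\rfloor,m}$ is literally the set $V_{D,m}$ defined the same way from the $\Q$-divisor $D$; likewise $V_{-\lceil D\rceil,m}=V_{-D,m}$. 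Thus it suffices to rerun the usual topological arguments with $D$ (resp. $-D$) in place of a Cartier divisor. For (1) this is precisely the proof of Demazure Vanishing, which uses integrality of the $a_i$ nowhere: nefness of $D$ makes its (piecewise-linear, convex over $\Q$) support function convex, which forces each $V_{D,m}$ to be either empty (exactly when $m\in P_D$) or star-shaped, hence contractible; in both cases $\widetilde H^{p-1}(V_{D,m};\C)=0$ for all $p\geq1$, and summing over $m$ gives $H^p(S,\lfloor D\rfloor)=0$ for $p>0$. For (2) this is the proof of Batyrev--Borisov Vanishing applied to $-D$: nefness of $D$ forces each $V_{-D,m}$ to be empty, contractible, or homotopy equivalent to a $(\dim P_D-1)$-sphere, so $\widetilde H^{p-1}(V_{-D,m};\C)$ is concentrated in degree $p=\dim P_D$; therefore $H^p(S,-\lceil D\rceil)=0$ for $p\neq\dim P_D$, and when $D$ is ample $P_D$ is full-dimensional, i.e.\ $\dim P_D=\dim S=2$, so in particular $H^0(S,-\lceil D\rceil)=H^1(S,-\lceil D\rceil)=0$.

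There is no serious obstacle here: the only new ingredient over the integral vanishing theorems is the first-paragraph remark that rounding changes neither the lattice points of the polytopes $P_D$, $P_{-D}$ nor the complexes $V_{\bullet,m}$ feeding Demazure's formula, after which one is essentially rereading the known proofs. The one point that needs genuine care is the bookkeeping of conventions in \cite[\S\S9.1--9.2]{cox2011toric} — which rays count as ``bad'' at a given $m$, and the precise description of $V_{E,m}$ as a subset of the sphere in $N_\R$ — so that the ``contractible versus sphere'' dichotomy comes out with the correct dimension $\dim P_D$; given that, the argument for a nice toric surface is immediate.
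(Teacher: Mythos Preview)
The paper does not prove this theorem at all: it is stated as a quotation of \cite[Theorem 9.3.5]{cox2011toric} and used as a black box, with no accompanying proof or sketch. Your proposal correctly identifies this and then goes further, outlining the argument from \cite{cox2011toric} (reduction to Demazure and Batyrev--Borisov vanishing via the rounding-doesn't-change-lattice-points observation). That sketch is accurate and matches the proof in the cited reference, but as far as comparison with the paper goes, there is nothing to compare: the paper's ``proof'' is the citation itself, which you already anticipated in your first sentence.
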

	
	With the above notation, we define the constant $\lambda(S)$ from \cref{maintoric} in the following way.
	\begin{definition}
		Let $S$ be a nice toric surface. We define $\lambda(S)$ to be
		\begin{equation*}
			\lambda(S)\colonequals 2+\frac{1}{4} \min_{R\subseteq [n]}(\sum_{i\in R}D_i)\cdot (2K+\sum_{i\in R}D_i)
		\end{equation*}
	\end{definition}
	
	This invariant is generally easy to compute, for example:
	\begin{example}
		\label[example]{lambdap2}
		Let $S=\PP_k^2$ be the projective plane. Then there are $3$ prime toric divisors. All of them are lines, in particular linearly equivalent to some fixed line $H\subseteq S$. Recall that $K=-3H$. Hence, 
		\begin{equation*}
			\min_{R\subseteq [n]}(\sum_{i\in R}D_i)\cdot (2K+\sum_{i\in R}D_i)=\min_{R\subseteq [3]}(|R|H\cdot (|R|-6)H)=\min_{0\leq r\leq 3}r(r-6)=-9
		\end{equation*}
		and thus $\lambda(S)=-\frac{1}{4}$.
	\end{example}
	
	\begin{example}
		Let $S=F_m$ be the $m$-th Hirzebruch surface, see \cite[Example 3.1.16]{cox2011toric} for a definition. Let $C_0,F\subseteq S$ be a section and a fibre, respectively. Then there are $4$ prime toric divisors $D_i$ on $S$, which are linearly equivalent to
		\begin{equation*}
			F,\text{   }C_0,\text{   }F,\text{   }C_0+mF.
		\end{equation*}
		One can check the $16$ possibilities of $(\sum_{i\in R}D_i)\cdot (2K+\sum_{i\in R}D_i)$, and see that $\lambda(S) = -\frac{m}{4}$.
	\end{example}
	
	We will also use the following lemma due to Martens \cite[Observation 2.4]{Martens1968}; we include the proof for completeness.
	\begin{lemma}
		\label[lemma]{bpfp-lemma}
		Let $C$ be a geometrically integral proper curve. Let $Z,E\in \CDiv(C)$ be two Cartier divisors on $C$. Assume that $E$ moves in a basepoint-free pencil. Then 
		\begin{equation*}
			h^0(Z+E)+h^0(Z-E)\geq 2h^0(Z)
		\end{equation*}
	\end{lemma}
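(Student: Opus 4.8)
The plan is to run the classical base-point-free pencil trick. Since $E$ moves in a base-point-free pencil, I would first fix two sections $s_0,s_1\in H^0(C,\mathcal{O}_C(E))$ spanning a two-dimensional subspace with empty base locus; equivalently, $s_0$ and $s_1$ have no common zero on $C$. If one only knows that the complete system $|E|$ is base-point-free with $h^0(C,E)\geq 2$, one produces such a pair by hand: take any nonzero $s_0$ (its zero divisor $\divv(s_0)+E$ is a finite subscheme of $C$, and it is nonempty because $h^0(C,E)\geq 2$ forces $\mathcal{O}_C(E)\not\cong\mathcal{O}_C$ on the integral proper curve $C$), and then, using that $k$ is infinite and that no point of $\supp(\divv(s_0)+E)$ is a base point of $|E|$, choose $s_1$ outside the finitely many hyperplanes of sections vanishing at those points; the resulting $s_1$ is automatically independent of $s_0$ and has no common zero with it.

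Next I would assemble $s_0,s_1$ into the morphism $(s_0,s_1)\colon \mathcal{O}_C^{\oplus 2}\to\mathcal{O}_C(E)$. Because the two sections have no common zero, this map is surjective on every fibre, hence surjective, and it fits into a short exact sequence $0\to\mathcal{K}\to\mathcal{O}_C^{\oplus 2}\to\mathcal{O}_C(E)\to 0$. On the integral curve $C$ the kernel $\mathcal{K}$ is locally the kernel of a surjection of a free rank-two module onto a free rank-one module over a local ring, hence locally free of rank one; comparing determinants in the sequence then identifies $\mathcal{K}\cong\mathcal{O}_C(-E)$. Tensoring the sequence with the invertible — and therefore flat — sheaf $\mathcal{O}_C(Z)$ preserves exactness and gives
\begin{equation*}
0\longrightarrow\mathcal{O}_C(Z-E)\longrightarrow\mathcal{O}_C(Z)^{\oplus 2}\longrightarrow\mathcal{O}_C(Z+E)\longrightarrow 0 .
\end{equation*}

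Finally I would pass to the long exact cohomology sequence; since $C$ is proper, all of these $H^0$'s are finite-dimensional. The left-exact portion $0\to H^0(C,Z-E)\to H^0(C,Z)^{\oplus 2}\to H^0(C,Z+E)$ shows that $H^0(C,Z)^{\oplus 2}/H^0(C,Z-E)$ injects into $H^0(C,Z+E)$, i.e. $2h^0(Z)-h^0(Z-E)\leq h^0(Z+E)$, which is exactly the claimed inequality. There is no serious obstacle here: the argument is formal once the sequence is in hand. The one point worth a little care, given that $C$ is permitted to be singular, is the identification $\mathcal{K}\cong\mathcal{O}_C(-E)$ and the surjectivity of $(s_0,s_1)$ — both of which go through precisely because $\mathcal{O}_C(E)$ is still an invertible sheaf, so that the fibrewise surjectivity criterion and the determinant computation remain valid with no appeal to smoothness.
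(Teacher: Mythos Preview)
Your argument is correct; it is the standard sheaf-theoretic form of the base-point-free pencil trick, and the care you take with the kernel identification and surjectivity on the possibly singular $C$ is appropriate (the local rings of the integral curve $C$ are domains, so the kernel computation goes through as you describe). The paper's proof reaches the same inequality by a closely related but more elementary route: it first passes to the normalisation $\widetilde{C}$ (using that $h^0$ is unchanged for divisors supported on $C_{\mathrm{reg}}$), then on the smooth curve picks two effective divisors $E',E''\in|E|$ with disjoint supports and argues directly inside $k(C)$ that $H^0(Z)\cap H^0(Z+E'-E'')=H^0(Z-E'')$ while both sit in $H^0(Z+E')$, whence the dimension formula gives the inequality. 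Your approach avoids the reduction to the normalisation and packages the same linear-algebra step into the long exact sequence; the paper's approach avoids any sheaf machinery but pays for it with the preliminary reduction. Both are standard and essentially equivalent.
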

	\begin{proof}
		By passing to linearly equivalent Cartier divisors, we may assume that $Z,E$ are supported on $C_\text{reg}$. Let $\pi:\widetilde{C}\rightarrow C$ be the normalization of $C$, then for all Cartier divisors $D\in \CDiv(C)$ which are supported on $C_\text{reg}$, we have an isomorphism
		\begin{equation*}
			H^0(\widetilde{C},\pi^*D)\cong H^0(C,\pi_{*}\OO_{\widetilde{C}}(\pi^*D))=H^0(C,D).
		\end{equation*}
		As the pullback $\pi^*E\in \Div(\widetilde{C})$ moves in a basepoint-free pencil, by passing to $\widetilde{C}$ we may assume that $C$ is a nice curve. 
		
		We view the space of global section $H^0(D)$ associated to a divisor as a subspace of $k(C)$, the field of rational functions on $C$. Let $E',E''\in \Div(C)$ be two effective divisors linearly equivalent to $E$ which share no points in their supports. Then 
		\begin{equation*}
			H^0(Z)\cap H^0(Z+E'-E'') = H^0(Z-E'')
		\end{equation*}
		Furthermore, $H^0(Z)\subseteq H^0(Z+E')$ and $H^0(Z+E'-E'')\subseteq H^0(Z+E')$. Using the dimension formula for $H^0(Z) + H^0(Z+E'-E'')$, we see that 
		\begin{align*}
			h^0(Z+E') &\geq \dim(H^0(Z) + H^0(Z+E'-E'')) \\ 	   	
					  &= h^0(Z)+h^0(Z+E'-E'')-h^0(Z-E'')
		\end{align*}
		Taking into account that $E''\equiv E'\equiv E$, we conclude the inequality $h^0(Z+E)+h^0(Z-E)\geq 2h^0(Z)$.
	\end{proof}
	\section{Proof of the main theorem on toric surfaces}
	
	We begin with a cohomological calculation.
	
	\begin{lemma}
		\label[lemma]{ec}
		Let $S$ be a nice toric surface and let $C\subseteq S$ be a geometrically integral ample curve on it. Let $D=\lfloor \frac{C}{2}\rfloor \in \Div(S)$. Then the Euler characteristic of $D$ is $\chi(S,D)=h^0(C,D|_C)$.
	\end{lemma}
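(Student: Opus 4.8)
The plan is to exploit the short exact sequence of sheaves on $S$ attached to the closed curve $C$ (an effective Cartier divisor, since $S$ is smooth), twisted by $\OO_S(D)$, namely
\[
0 \longrightarrow \OO_S(D - C) \longrightarrow \OO_S(D) \longrightarrow \iota_*\OO_C(D|_C) \longrightarrow 0,
\]
together with the toric vanishing theorem (\cref{vanishing}) applied to the ample $\Q$-divisor $\tfrac{C}{2}$. Before using the sequence one records two elementary facts. First, $D = \lfloor \tfrac{C}{2}\rfloor$ has non-negative coefficients in the chosen toric representation, so it is effective; moreover $C$ cannot be one of the prime toric divisors appearing in $D$ with positive coefficient, because if $C$ equalled some $D_j$ then its toric representation would be $C \equiv D_j$ and $\lfloor \tfrac{C}{2}\rfloor$ would have coefficient $\lfloor \tfrac12\rfloor = 0$ on $D_j$. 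Hence $C \not\subseteq \supp(D)$, the scheme $C \cap D$ is finite, the canonical restriction $D|_C \in \CDiv(C)$ is defined, and (via the canonical isomorphism $\iota^*\OO_S(D) \cong \OO_C(D|_C)$ discussed above) the quotient sheaf in the displayed sequence really is $\iota_*\OO_C(D|_C)$.

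Next I would compute the two cohomology groups that will be forced to vanish. Since $C$ is ample, $\tfrac{C}{2}$ is an ample, in particular nef, $\Q$-divisor, and $D = \lfloor \tfrac{C}{2}\rfloor$; thus \cref{vanishing}(1) gives $H^i(S, D) = 0$ for all $i > 0$, so $\chi(S, D) = h^0(S, D)$. For the other term, write the chosen representation $C \equiv \sum a_i D_i$ with $a_i \ge 0$; since $\lfloor \tfrac{a_i}{2}\rfloor + \lceil \tfrac{a_i}{2}\rceil = a_i$ for every $i$, one gets $C - D \equiv \lceil \tfrac{C}{2}\rceil$, whence $D - C \equiv -\lceil \tfrac{C}{2}\rceil$ as divisor classes. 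Applying \cref{vanishing}(2) to the ample $\Q$-divisor $\tfrac{C}{2}$ then yields $H^i(S, D - C) = H^i(S, -\lceil \tfrac{C}{2}\rceil) = 0$ for $i = 0, 1$.

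Finally I would feed these vanishings into the long exact cohomology sequence of the short exact sequence above: as $H^0(S, D-C) = H^1(S, D-C) = 0$, the restriction map $H^0(S, D) \to H^0(C, D|_C)$ is an isomorphism, so $h^0(S,D) = h^0(C, D|_C)$; combining this with $\chi(S,D) = h^0(S,D)$ gives the claim. I do not expect a genuine geometric obstacle — the toric vanishing theorem does all the work. The only points requiring care are bookkeeping ones: verifying that $C \cap D$ is finite so that the canonical $D|_C$ exists, and correctly identifying $C - \lfloor \tfrac{C}{2}\rfloor$ with $\lceil \tfrac{C}{2}\rceil$ (at the level of divisor classes) so that \cref{vanishing}(2) can be invoked for $D - C$.
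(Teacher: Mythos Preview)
Your proof is correct and takes essentially the same approach as the paper's: apply the toric vanishing theorem to the ample $\Q$-divisor $\tfrac{C}{2}$ to obtain $\chi(S,D)=h^0(S,D)$ and $H^0(S,D-C)=H^1(S,D-C)=0$, then read off the isomorphism $H^0(S,D)\cong H^0(C,D|_C)$ from the long exact sequence of the $D$-twisted ideal sheaf sequence. Your additional verification that $C\cap D$ is finite is not strictly needed here---the short exact sequence and the quantity $h^0(C,D|_C)$ make sense via $\iota^*\OO_S(D)$ regardless---but it does no harm.
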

	\begin{proof}
		By \cref{vanishing}, we see that $h^i(S,D)=0$ for $i=1,2$, hence the Euler characteristic of $D$ is 
		\begin{equation}
			\label{euler-char-d}
			\chi(S,D)=h^0(S,D).
		\end{equation}
		Consider the $D$-twisted ideal sheaf short exact sequence of $C$ in $S$,
		\begin{equation*}
			0\rightarrow \OO_S(D-C)\rightarrow \OO_S(D)\rightarrow \OO_C(D|_C)\rightarrow 0.
		\end{equation*}
		Taking cohomology, we obtain the following exact sequence:
		\begin{equation*}
			H^0(S,D-C)\rightarrow H^0(S,D)\rightarrow H^0(C,D|_C)\rightarrow H^1(S,D-C).
		\end{equation*}
		But $D-C=-\lceil\frac{C}{2}\rceil$ and by \cref{vanishing}, we see that $H^0(S,D-C)=H^1(S,D-C)=0$, hence $H^0(S,D)\rightarrow H^0(C,D|_C)$ is an isomorphism. Combining with \eqref{euler-char-d}, we see that the Euler characteristic of $D$ is the dimension of the global sections of $D|_C$,
		\begin{equation*}
			\chi(S,D)=h^0(C,D|_C). \qedhere
		\end{equation*}
	\end{proof}
	
	We now introduce the main geometric ingredient needed for the proof of \cref{maintoric}. The proof below is inspired by the work of Coppens and Kato \cite{Coppens1991TheGO} on projective plane curves. 
	
	\begin{proposition}
		\label[proposition]{mainprop}
		Let $S$ be a nice toric surface and let $C\subseteq S$ be a geometrically integral ample curve on it. Let $D=\lfloor \frac{C}{2}\rfloor \in \Div(S)$. Then for all Cartier divisors $E\in \CDiv(C)$ moving in a basepoint-free pencil with degree satisfying 
		\begin{equation*}
			\deg E < \frac{C^2}{4} + \lambda (S)
		\end{equation*}
		the cohomology $H^0(C,D|_C-E)\neq 0$ does not vanish.
	\end{proposition}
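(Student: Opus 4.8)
The plan is to run a Riemann–Roch argument on $C$ together with the cohomological computation of \cref{ec}, using the Martens-type inequality \cref{bpfp-lemma} to convert "$E$ moves in a basepoint-free pencil" into a genuine lower bound on $h^0(C,D|_C-E)$. The key input is \cref{ec}, which tells us that $\chi(S,D)=h^0(C,D|_C)$, so the numerical size of $h^0(C,D|_C)$ is controlled by the intersection theory on $S$ via the toric Riemann–Roch formula; we must also bound $h^1(C,D|_C)$ in order to turn the Euler characteristic of $D|_C$ on $C$ into a lower bound on $h^0(C,D|_C)$ itself, and this is where Serre duality on $C$ and the hypothesis $C+K_S>0$ enter.

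First I would compute $\chi(C,D|_C)$ by Riemann–Roch on $C$: $\chi(C,D|_C)=\deg(D|_C)+1-p_a(C)=C\cdot D+1-p_a(C)$, where $p_a(C)$ is the arithmetic genus, which by adjunction on $S$ equals $1+\tfrac12 C\cdot(C+K_S)$. Hence $\chi(C,D|_C)=C\cdot D-\tfrac12 C\cdot(C+K_S)$. On the other hand, $h^0(C,D|_C)=\chi(S,D)=\chi(S,\lfloor \tfrac{C}{2}\rfloor)$, and I would expand this by Riemann–Roch on the surface $S$: $\chi(S,D)=\chi(S,\O_S)+\tfrac12 D\cdot(D-K_S)$. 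Comparing the two expressions lets me extract $h^1(C,D|_C)=h^0(C,D|_C)-\chi(C,D|_C)$ as an explicit intersection number on $S$ involving the rounding error $C-2D=\lceil C/2\rceil-\lfloor C/2\rfloor$. It is precisely this rounding term $\sum_{i\in R}D_i$ (where $R$ is the set of indices with odd coefficient in the chosen toric representation of $C$) that produces the quantity $(\sum_{i\in R}D_i)\cdot(2K_S+\sum_{i\in R}D_i)$ appearing in the definition of $\lambda(S)$; so after this step I expect the bound $h^1(C,D|_C)$ (equivalently $\deg$ of the relevant part of $K_C-D|_C$, via Serre duality $h^1(C,D|_C)=h^0(C,K_C-D|_C)$, using that $C$ is Gorenstein so $\omega_C=\O_C(K_S+C)|_C$ has the expected degree) to come out bounded in terms of $\tfrac{C^2}{4}$ and $\lambda(S)$.

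Next, with the identity $h^0(C,D|_C-E)\ge h^0(C,D|_C)-\deg E$ — which holds whenever $E$ is effective of degree $\deg E$, but is too weak — I would instead invoke \cref{bpfp-lemma} with $Z=D|_C-E$: since $E$ moves in a basepoint-free pencil,
\begin{equation*}
h^0(C,D|_C)+h^0(C,D|_C-2E)\ge 2\,h^0(C,D|_C-E).
\end{equation*}
That still has the unknown $h^0(C,D|_C-2E)$ on the left. The better route is to apply \cref{bpfp-lemma} repeatedly, or more cleanly to use the standard consequence that for $E$ moving in a basepoint-free pencil the function $j\mapsto h^0(C,Z-jE)$ drops by at least $1$ at each step until it hits $0$ (a "ladder" argument: between consecutive values the difference $h^0(Z-jE)-h^0(Z-(j+1)E)\ge 1$ as long as $h^0(Z-(j+1)E)>0$ or even $\ge h^0(Z-jE)-\deg E$ more crudely). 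Concretely, from \cref{bpfp-lemma} with $Z=D|_C$ one gets $h^0(C,D|_C-E)\ge h^0(C,D|_C)-\bigl(h^0(C,D|_C)-h^0(C,D|_C-E)\bigr)$ tautologically, so the honest way is: apply the lemma to conclude $h^0(C,D|_C)-h^0(C,D|_C-E)\le h^0(C,D|_C+E)-h^0(C,D|_C)$, i.e. the first difference of $h^0(D|_C+jE)$ is non-increasing in $j$; combined with an upper bound $h^0(C,D|_C+jE)\le \deg(D|_C)+j\deg E+1-p_a$ valid once this is in the non-special range, one deduces the first difference is at most $\deg E$, hence $h^0(C,D|_C-E)\ge h^0(C,D|_C)-\deg E$. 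Then $h^0(C,D|_C-E)>0$ follows from $\deg E<h^0(C,D|_C)$, and by Step 1 the inequality $h^0(C,D|_C)>\tfrac{C^2}{4}+\lambda(S)$ is exactly what the rounding computation yields (using $C+K_S>0$ to control the special-ness, and ampleness of $C$ to know $D=\lfloor C/2\rfloor$ is nef so that \cref{vanishing} applies as in \cref{ec}).

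The main obstacle I anticipate is the bookkeeping in Step 1: correctly identifying $C-2\lfloor C/2\rfloor$ with $\sum_{i\in R}D_i$ for the appropriate $R\subseteq[n]$, tracking that the toric representation of $C$ has non-negative coefficients (as the paper's convention guarantees for rescalings of closed curves), and verifying that $D=\lfloor C/2\rfloor$ is nef — ampleness of $C$ gives that $C$ is nef and $\tfrac12 C$ is nef, but nefness is not obviously preserved under the floor, so one must argue via the polytope $P_{C/2}$ and the containment of lattice polytopes, or simply observe that $\lfloor C/2\rfloor$ is globally generated on the toric surface because its polytope is still $2$-dimensional for $C^2$ large, which is where a mild lower bound on $C^2$ (absorbed into "$C+K_S>0$" together with ampleness) is used. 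The conversion of $\lambda(S)$'s defining minimum over $R$ into the clean bound will require noting that the relevant $R$ is determined by $C$, so the minimum over all $R$ is a (possibly wasteful but valid) lower bound, which is exactly why $\lambda(S)$ is defined as a minimum. Once the rounding term is pinned down, the rest is Riemann–Roch and \cref{bpfp-lemma}, both already available.
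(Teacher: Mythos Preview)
Your argument has a real gap, and it is exactly at the point where you write ``by Step~1 the inequality $h^0(C,D|_C)>\tfrac{C^2}{4}+\lambda(S)$ is exactly what the rounding computation yields.'' This is false: by \cref{ec} one has $h^0(C,D|_C)=\chi(S,D)=1+\tfrac12(D^2-K\cdot D)$, and since $D\approx \tfrac{C}{2}$ this is on the order of $\tfrac{C^2}{8}$, not $\tfrac{C^2}{4}$. (Check $S=\PP_k^2$, $C$ of even degree $d$: then $h^0(C,D|_C)=\binom{d/2+2}{2}\sim d^2/8$, while $\tfrac{C^2}{4}=d^2/4$.) Relatedly, the inequality $h^0(C,D|_C-E)\ge h^0(C,D|_C)-\deg E$ that you extract from the ladder/convexity argument is just the trivial bound valid for any effective $E$; the basepoint-free pencil hypothesis has not yet done any work. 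Also note that the proposition does \emph{not} assume $C+K_S>0$; that hypothesis belongs to the main theorem, not here.

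The paper's proof gets the extra factor of $2$ by a contradiction--doubling--duality manoeuvre. One assumes $h^0(C,D|_C-E)=0$; then \cref{bpfp-lemma} applied with $Z=D|_C$ gives $h^0(C,D|_C+E)\ge 2\,h^0(C,D|_C)=2\chi(S,D)$, which is now $\sim\tfrac{C^2}{4}$. Feeding this into Riemann--Roch for $D|_C+E$ on $C$ and using Serre duality ($K_C=(K_S+C)|_C$) shows that
\[
h^0\bigl(C,(K_S+C-D)|_C-E\bigr)\;\ge\;\tfrac14(C-2D)\cdot(2K_S+C-2D)+2+\tfrac{C^2}{4}-e\;\ge\;\lambda(S)+\tfrac{C^2}{4}-e\;>\;0,
\]
where $C-2D=\sum_{i\in R}D_i$ is the rounding term. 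Finally, because $-K_S=\sum_i D_i\ge\sum_{i\in R}D_i=C-2D$, one has $K_S+C-D\le D$ as divisors, hence $h^0(C,D|_C-E)\ge h^0(C,(K_S+C-D)|_C-E)>0$, the desired contradiction. The step you are missing is precisely this: use the vanishing hypothesis to double, then pass to the Serre dual side, then come back via $K_S+C-D\le D$.
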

	\begin{proof}
		Let $E\in \CDiv(C)$ be a Cartier divisor moving in a basepoint-free pencil, of degree $\deg E=e$ satisfying $e < \frac{C^2}{4} + \lambda (S)$. Assume by way of contradiction that $H^0(C,D|_C-E)=0$. 
		
		By \cite[Chapter I Propositions 2.3, 2.4]{altman1970introduction} the dualising sheaf of $C$ is a line bundle which corresponds to the Cartier divisor class of $K_C=(K+C)|_C$. By the Riemann-Roch theorem on $C$, 
		\begin{equation}
			\label{h0}
			h^0\colonequals h^0((K+C-D)|_C-E) = h^0(K_C-(D|_C + E)) = p_a-1 - D\cdot C - e +h^0(D|_C+E)
		\end{equation}
		where $p_a$ is the arithmetic genus of $C$. By \cref{bpfp-lemma} applied to $D|_C$ and using the vanishing of $h^0(D|_C-E)$, we see that $h^0(D|_C+E)\geq 2h^0(D|_C)$.
		By \cref{ec}, we conclude that
		\begin{equation}
			\label{ineq}
			h^0(D|_C+E)\geq 2\chi(S,D)
		\end{equation}
		Using the Hirzebruch-Riemann-Roch theorem, the Euler characteristic $\chi(S,D)$ is 
		\begin{equation}
			\label{hirzebruch}
			\chi(S,D) = \chi(S,0)+\frac{D^2-K\cdot D}{2} = 1 + \frac{D^2-K\cdot D}{2};
		\end{equation}
		here we used the toricness of $S$ to obtain $\chi(S,0)=1$. Applying \eqref{ineq} and \eqref{hirzebruch} to \eqref{h0}, we obtain the inequality 
		\begin{equation*}
			h^0 \geq p_a-1 - D\cdot C - e + D^2-K\cdot D + 2
		\end{equation*}
		Note that $p_a-1=\frac{\deg(K_C)}{2}=\frac{(K+C)\cdot C}{2}$ and that
		\begin{equation*}
			D^2=\frac{1}{4}(2D-C)\cdot(2D+C)+\frac{C^2}{4}.
		\end{equation*}
		Hence 
		\begin{align*}
			h^0 &\geq \frac{1}{2}(K+C)\cdot C - D\cdot (K+C) +\frac{1}{4}(2D-C)\cdot(2D+C)+2+\frac{C^2}{4}-e \\
			&= \frac{1}{2}(K+C)\cdot (C-2D) + \frac{1}{2}(C-2D)\cdot(-\frac{2D+C}{2})+2+\frac{C^2}{4}-e \\
			&= \frac{1}{2}(C-2D)\cdot (K+C-\frac{2D+C}{2}) + 2+\frac{C^2}{4}-e\\
			&= \frac{1}{4}(C-2D)(2K+C-2D)+ 2+\frac{C^2}{4}-e.
		\end{align*}
		Now, we note that the divisor $C-2D$ is of the form $\sum_{i\in R}D_i$ for some subset $R\subseteq [n]$. Hence, by definition, 
		\begin{equation*}
			\frac{1}{4}(C-2D)(2K+C-2D)+ 2\geq \lambda (S).
		\end{equation*}
		Thus, $h^0\geq \frac{C^2}{4}+\lambda(S)-e$, which by assumption is greater than $0$. Finally, note that $K+C-D\leq D$, so
		\begin{equation*}
			h^0(D|_C-E)\geq h^0((K+C-D)|_C-E)=h^0>0
		\end{equation*}
		which is equivalent to the non-vanishing of $H^0(C,D|_C-E)$.
	\end{proof}		
	
	We are now ready to prove \cref{maintoric}.
	
	\begin{theorem}
		\label{maintoric-proof}
		Let $S$ be a nice toric surface. Let $C\subseteq S$ be a geometrically integral closed curve with simple singularities of multiplicities $\delta_1,\dots, \delta_n$ such that $C+K>0$. Assume furthermore that the normalisation of $C$ is ample on the blowup of $S$ at the singular points of $C$. Then there exists a closed curve $D\subseteq S$ with $C\cdot D\leq \frac{C^2}{2}$, and such that for all positive integers
		\begin{equation*}
			e<\min \left(\frac{C^2-\sum_{i=1}^{n}\delta_i^2}{9}, \frac{C^2}{4}+\lambda(S)\right),
		\end{equation*}
		the set $|C|_e$ of degree $e$ points on $C$ admits the following decomposition
		\begin{equation*}
			|C|_e=F\cup \bigcup_{i=1}^m S_i,
		\end{equation*} 
		where $F$ is a finite set, and for each $i$, the set $S_i$ is the set of degree $e$ points obtained by intersecting $C$ with the linear system of curves linearly equivalent to $D$ that pass through a certain effective Weil divisor $B_i\subseteq C$, that is, 
		\begin{equation*}
			S_i = \left\{P\in|C|_e \text{ }\bigg|\text{ } \exists D'\in |D|:P=C\cap D'-B_i\right\}.
		\end{equation*}
		Furthermore, the finite set $F$ is the set of all singular points and regular points of degree $e$ which do not move as Cartier divisors.
	\end{theorem}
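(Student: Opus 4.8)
The plan is to deduce \cref{maintoric-proof} by exhibiting the curve $D = \lfloor \frac{C}{2}\rfloor$ as an interpolation of degree $e$ points on $C$ (in the sense of \cref{degree-e-interpolation}) and then invoking \cref{decomposition}. The hypotheses of \cref{decomposition} are that $C$ be ample (which we get from the ampleness assumption: since $\widetilde C$ is ample on $\Bl(S)$, $C$ is ample on $S$), and that $W_e\widetilde C$ have finitely many rational points, which is exactly the content of \cref{singsmithvogt} under the bound $e<\frac{C^2-\sum\delta_i^2}{9}$. So the real work is verifying the three conditions of \cref{degree-e-interpolation} for $D=\lfloor \frac{C}{2}\rfloor$, together with the bookkeeping claim $C\cdot D\leq \frac{C^2}{2}$.

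The bound $C\cdot D\leq \frac{C^2}{2}$ follows because $D\leq \frac{C}{2}$ as a $\Q$-divisor and $C$ is nef (ample), so $C\cdot D\le C\cdot\frac{C}{2}=\frac{C^2}{2}$; the same comparison gives $C\cdot D<C^2$, which is condition \eqref{resisinter}. Condition \eqref{surjective}, the surjectivity of $H^0(S,D)\rightarrow H^0(C,D|_C)$, is essentially the content of the proof of \cref{ec}: the relevant piece of the long exact sequence attached to $0\to\OO_S(D-C)\to\OO_S(D)\to\OO_C(D|_C)\to 0$ shows $H^0(S,D)\to H^0(C,D|_C)$ is already an isomorphism, using $D-C=-\lceil\frac{C}{2}\rceil$ and the toric vanishing theorem \cref{vanishing}. (Here one must be a touch careful because $C$ may be singular, so $\OO_C(D|_C)$ should be read via the canonical restriction discussed before \cref{varying-intersection}, which is legitimate since $C\cap D$ is finite by \cref{varying-intersection}\eqref{finite-intersection}.) Condition \eqref{lift}, namely that $H^0(C,D|_C-E)\neq 0$ for every degree $e$ Cartier divisor $E$ moving in a basepoint-free pencil, is precisely \cref{mainprop}, whose hypothesis $\deg E<\frac{C^2}{4}+\lambda(S)$ is guaranteed by the stated bound on $e$.

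Once the three conditions are checked, \cref{decomposition} delivers the decomposition $|C|_e=F\cup\bigcup_{i=1}^m S_i$ verbatim, with $F$ the set of singular points and regular degree $e$ points not moving as Cartier divisors, and each $S_i$ of the asserted form. I would assemble the argument in the order: (1) extract ampleness of $C$ from the blowup hypothesis; (2) invoke \cref{singsmithvogt} plus \cref{lang} to get finiteness of $W_e\widetilde C(k)$; (3) set $D=\lfloor \frac{C}{2}\rfloor$ and verify \eqref{resisinter} and the intersection bound by nefness; (4) verify \eqref{surjective} by rerunning the cohomology computation from \cref{ec}; (5) verify \eqref{lift} by citing \cref{mainprop}; (6) conclude via \cref{decomposition}. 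The main obstacle is purely expository rather than mathematical: making sure the restriction $D|_C$ on the \emph{singular} curve $C$ is handled consistently between \cref{ec} (where it is used cohomologically on $C$ itself) and \cref{decomposition} (where the normalisation $\widetilde C$ enters), i.e.\ keeping straight which statements live on $C$ versus on $\widetilde C$, and confirming that the finiteness of $C\cap D'$ for all $D'\in|D|$ — needed for the canonical restriction and for \cref{varying-intersection} — indeed holds here because $C$ is ample and $C\cdot D<C^2$.
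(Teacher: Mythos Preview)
Your approach is essentially identical to the paper's: define $D=\lfloor \tfrac{C}{2}\rfloor$, verify the three conditions of \cref{degree-e-interpolation} using \cref{vanishing} and \cref{mainprop}, check finiteness of $W_e\widetilde C(k)$ via \cref{singsmithvogt}, and conclude by \cref{decomposition}.

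There is one genuine omission. You never use the hypothesis $C+K>0$, and without it your candidate $D=\lfloor \tfrac{C}{2}\rfloor$ need not be a closed curve at all. Recall that on a toric surface $\lfloor\,\cdot\,\rfloor$ depends on the chosen toric representation $C\equiv\sum a_iD_i$; with the paper's convention one takes $a_i\ge 0$, but if every $a_i\in\{0,1\}$ then $\lfloor\tfrac{C}{2}\rfloor=0$, which is not a closed curve and so cannot be an interpolation in the sense of \cref{degree-e-interpolation} (nor does it satisfy the conclusion ``there exists a closed curve $D\subseteq S$''). The paper handles this by invoking $C+K>0$: since $-K\equiv\sum D_i$, the condition $C+K>0$ says precisely that $C-\sum D_i$ is effective and nontrivial, so by \cite[Corollary 6.3.21]{cox2011toric} one may choose a toric representation with all $a_i\ge 1$ and at least one $a_j\ge 2$, whence $\lfloor\tfrac{C}{2}\rfloor$ is a nonzero effective divisor, i.e.\ a closed curve. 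You should insert this step between your items (2) and (3); once it is in place, the rest of your outline goes through exactly as in the paper.
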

	
	\begin{proof}
		Let $\widetilde{C}\subseteq \Bl(S)$ be the normalisation of $C$ embedded on the blowup of $S$ along the singularities of $C$. Note that by assumption $\widetilde{C}$ is ample on $\Bl(S)$ and in particular $C$ is ample on $S$. 
		
		By the assumption $C+K>0$ and \cite[Corollary 6.3.21]{cox2011toric} we may choose a toric representation 
		\begin{equation*}
			C\equiv \sum_{i=1}^n  a_i D_i
		\end{equation*}
		such that $a_i\geq 1$ for all $i\in [n]$ and $a_j\geq 2$ for at least one $j\in [n]$. Take $D = \lfloor \frac{C}{2}\rfloor \in \Div(S)$ with the above toric representation of $C$. Then $D$ is a nonzero effective divisor, hence a closed curve $D\subseteq S$. By definition, as divisors $2D\leq C$. Taking the intersection product with $C$ and taking into account the ampleness of $C$, we conclude that $C\cdot D\leq \frac{C^2}{2}$.
		
		Let $e < \min (\frac{C^2-\sum_{i=1}^{n}\delta_i^2}{9}, \frac{C^2}{4}+\lambda(S))$. As $S$ is toric, it has irregularity $h^1(S,\OO_S)=0$. Hence, by \cref{singsmithvogt}, the Brill--Noether locus $W_e\widetilde{C}_{\overline{k}}$ contains no positive-dimensional abelian varieties. Applying the Mordell-Lang conjecture (proven by Faltings \cite{FALTINGS1994175}), we see that the set of rational points $W_e\widetilde{C}(k)$ is finite. 
		
		We now claim that $D$ is an interpolation of degree $e$ points on $C$; we have to verify the three conditions of \cref{degree-e-interpolation}. By the ampleness of $C$, we see that $C\cdot D \leq \frac{C^2}{2} < C^2$, which is condition \eqref{resisinter}. By \cref{vanishing} and the observation that $D-C=-\lceil\frac{C}{2}\rceil$ we see that $H^1(S,D-C)=0$ and in particular the map
		\begin{equation*}
			H^0(S,D)\rightarrow H^0(C,D|_C)
		\end{equation*}
		is surjective, this is condition \eqref{surjective}. By \cref{mainprop}, for all degree $e$ Cartier divisors $E\in \CDiv(C)$, the divisor $D|_C-E$ has global sections, which is condition \eqref{lift}. Thus, the conditions of \cref{decomposition} are satisfied; applying it gives the desired decomposition.
	\end{proof}
	
	As promised in \cref{blample}, we give an example of an explicit sufficient condition for $\widetilde{C}$ to be ample on $\Bl(S)$.
	 
	\begin{example}
		\label[example]{ample}
		Let $S$ be a nice toric surface and let $C\subseteq S$ be an integral ample curve. Denote by $P_1,\dots, P_n$ the singular points of $C$, denote by $\delta_i$ the multiplicity $P_i$ on $C$. Denote $r=\min(C\cdot D_i)$, and assume that there exist positive numbers $\alpha_i$ such that 
		\begin{equation*}
			\frac{\delta_i}{\alpha_i}<r,\text{   }\sum_{i=1}^{n}\alpha_i = 1.
		\end{equation*}
		We claim that $\widetilde{C}$ is ample on the blowup $\pi: \Bl(S)\rightarrow S$ of $S$ along the singular points of $C$. By the main result of \cite{Rocco1997GenerationOK}, for all $x\in S$ the Seshadri constant of $\OO_S(C)$ at $x$ satisfies
		\begin{equation*}
			\epsilon(S,\OO_S(C);x)\geq r.
		\end{equation*}
		Hence, for all $i\leq n$, the $\R$-divisor $\alpha_i \pi_i^*C-\delta_i E_i$ is ample on the blowup $\pi_i :\Bl_{P_i}(S)\rightarrow S$ of $S$ along $P_i$. Pulling back to $\Bl(S)$, we see that the divisors $\alpha_i \pi^*C-\delta_i E_i$ are ample on $\Bl(S)$. As a sum of ample divisors is ample, we conclude that 
		\begin{equation*}
			\widetilde{C} \equiv \pi^*C-\sum_{i=1}^n \delta_i E_i
		\end{equation*}
		is ample on $\Bl(S)$.
		
		In particular, for $S=\PP_k^2$ and $C$ of degree $d$ with $\delta$  ordinary nodes and cusps such that $2\delta < d$, the normalisation $\widetilde{C}$ is ample on the blowup $\Bl(S)$. Indeed, in this case $r=d$, the multiplicities are $\delta_i=2$ and we may take $\alpha_i = \frac{1}{\delta}$. We will use this case in the next section.
	\end{example}	
	
	\section{Plane curves}
	As discussed in \cref{basebig}, we would like a version of \cref{maintoric} for which the degrees $\deg(B_i) = C\cdot D - e$ are minimised. Along with the benefit of $B_i$ being simpler to understand, for some degrees $e'$, this also can help prove that there are only finitely many linear equivalence classes of degree $e'$ points, as done in \cite{10.1093/imrn/rnaa137} and in \cref{no-avs-plane}. We first justify studying curves on $\PP_k^2$ rather than on general nice toric surfaces. The following example shows that the natural analogue of Debarre--Klassen fails for curves on Hirzebruch surfaces. 
	
	\begin{example}
		\label[example]{mainexample}
		Let $S=F_1$ be the first Hirzebruch surface. Let $C_0,F\subseteq S$ be a section and a fibre, respectively. By \cite[Chapter V Proposition 2.3]{hartshorne1977algebraic}, the N\'eron-Severi group of $S$ is $\text{NS}(S)\cong \Z\oplus\Z$ and generated by $C_0,F$ which satisfy 
		\begin{equation*}
			C_0^2=-1,\text{   }C_0\cdot F=1,\text{   }F^2=0.
		\end{equation*}
		By \cite[Chapter V Corollary 2.18]{hartshorne1977algebraic}, for all $n\in \Z_{>0}$ there exists a nice very ample curve $C_n \subseteq S$ of linear equivalence class $nC_0+(n+1)F$. By the same argument, let $D\subseteq S$ be a very ample curve of linear equivalence class $C_0+3F$. Since $D$ is very ample on $S$, the divisor $D|_{C_n}$ moves in a basepoint-free pencil. Hence, by Hilbert's irreducibility theorem, there exists a closed point $P\in |C_n|$ linearly equivalent to $D|_{C_n}$. Note that
		\begin{equation*}
			C^2_n = n^2+2n,\text{   } \deg P = D\cdot C_n = 3n+1
		\end{equation*}
		so for big enough $n$, the inequality $\deg P<\frac{C_n^2}{9}$ holds and the point $P$ should be considered as a low degree point. Fix some big $n$ and denote $C=C_n$. A cohomological calculation shows that the cohomology $H^1(S,D)=0$ vanishes, and $H^1(S,D-C)\neq 0$ does not vanish. Consider the $D$-twisted ideal sheaf short exact sequence of $C$ in $S$,
		\begin{equation*}
			0\rightarrow \OO_S(D-C)\rightarrow \OO_S(D)\rightarrow \OO_C(P)\rightarrow 0.
		\end{equation*}
		Taking the long exact sequence of cohomology gives the following exact sequence
		\begin{equation*}
			H^0(S,D-C)\rightarrow H^0(S,D)\rightarrow H^0(C,P)\rightarrow H^1(S,D-C)\rightarrow H^1(S,D).
		\end{equation*}
		But $H^0(S,D-C)=H^1(S,D)=0$ and hence by exactness 
		\begin{equation*}
			h^0(C,P)=h^0(S,D)+h^1(S,D-C)
		\end{equation*}
		and as $H^1(S,D-C)\neq 0$ does not vanish, we see that $h^0(C,P)>h^0(S,D)$. Hence, the map $H^0(S,D)\rightarrow H^0(C,P)$ cannot be surjective. Thus, by Hilbert's irreducibility theorem, there exists a point $P'\in |C|$ which is linearly equivalent to $D|_C$ but not of the form $C\cap D'$ for all curves $D'\subseteq S$ which are linearly equivalent to $D$.  
	\end{example}
	
	In the projective plane $\PP_k^2$, due to $H^1(\PP_k^2,-)$ being the $0$ functor on line bundles, the issues imposed by the non-vanishing of $H^1(S,D-C)$ disappear. In this section, we prove \cref{plane}. The main tools needed are explicit descriptions of some pencils on plane curves due to Coppens and Kato. 
	
	\begin{lemma}[\cite{Coppens1991TheGO}, Main Lemma]
		\label[lemma]{mainlem}
		Let $\widetilde{C}$ be the normalisation of an integral plane curve $C$ of degree $d$ with $\delta$ ordinary nodes and cusps as its singularities. Let $E\in \Div \widetilde{C}$ be an effective divisor of degree $e$ moving in a basepoint free pencil, such that $e+\delta < (m+1)(d-(m+1))$ for some integer $m$. Then the cohomology $H^0(\widetilde{C}, m\widetilde{H}-E)\neq 0$ does not vanish, where $\widetilde{H}$ is a pullback of a line $H\subseteq \PP_{k}^{2}$ under the map $\widetilde{C}\rightarrow C\hookrightarrow \mathbb{P}^2_k$. 
	\end{lemma}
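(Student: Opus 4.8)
The plan is to run the argument of \cref{mainprop} on the normalisation $\widetilde C$, with the line class $\widetilde H$ playing the role of $D|_C$, supplying for small $m$ the extra input of Noether's $AF+BG$ theorem, following \cite{Coppens1991TheGO}.

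First I would argue by contradiction: suppose $H^0(\widetilde C,m\widetilde H-E)=0$. Since $E$ moves in a base-point-free pencil on the nice curve $\widetilde C$, the base-point-free pencil trick --- equivalently \cref{bpfp-lemma} applied with $Z=m\widetilde H$ --- yields
\begin{equation*}
	h^0(\widetilde C,m\widetilde H+E)\ \geq\ 2\,h^0(\widetilde C,m\widetilde H).
\end{equation*}
The next step is to rewrite both sides in terms of adjoint curves. Realising $\widetilde C$ as a curve in the blow-up $\Bl(\PP_k^2)$ of $\PP_k^2$ at the singular points of $C$ and using adjunction there, one finds $K_{\widetilde C}\sim(d-3)\widetilde H-\mathfrak c$, where $\mathfrak c\geq 0$ is the pulled-back conductor divisor, of degree $2\delta$ (for a node $\mathfrak c$ carries the two branch points, for a cusp the cusp preimage with multiplicity two). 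Hence, with $A\colonequals d-3-m$, Serre duality on $\widetilde C$ gives
\begin{equation*}
	h^1(\widetilde C,m\widetilde H)=h^0(\widetilde C,A\widetilde H-\mathfrak c),\qquad h^1(\widetilde C,m\widetilde H+E)=h^0(\widetilde C,A\widetilde H-\mathfrak c-E),
\end{equation*}
so, together with Riemann--Roch, the displayed inequality becomes a lower bound for $h^0(\widetilde C,A\widetilde H-\mathfrak c-E)$ in terms of $h^0(\widetilde C,A\widetilde H-\mathfrak c)$, $e$, $d$ and $m$. I would also record the complementary bound $h^0(\widetilde C,A\widetilde H-\mathfrak c)\geq\binom{A+2}{2}-\delta=\binom{d-1-m}{2}-\delta$, obtained by restricting to $\widetilde C$ the degree-$A$ plane curves through the singular points of $C$ (each such point imposing one linear condition on degree-$A$ curves, which moreover automatically cut out the conductor there).

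The crux is the matching upper bound on $h^0(\widetilde C,A\widetilde H-\mathfrak c-E)$: a general divisor of the pencil $|E|$ must impose close to the expected number of conditions on the adjoint series $|A\widetilde H-\mathfrak c|$, and this is exactly where the hypothesis $e+\delta<(m+1)(d-m-1)$ enters. This is the one genuinely non-formal step, since a general member of a base-point-free pencil need not impose independent conditions on an arbitrary linear system --- one must exploit the special projective geometry of plane curves. I would prove it by an inductive argument: a section of $A\widetilde H-\mathfrak c-E$ is a degree-$A$ plane curve through the singularities of $C$ and through the image of $E$; if such curves over-vanish, Noether's fundamental theorem ($AF+BG$) splits off a plane curve of strictly smaller degree through part of $E$, and, because $E$ moves in a pencil (so its image points cannot be concentrated), the residual divisor again moves in a base-point-free pencil of controlled degree, to which the inductive hypothesis applies. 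The base cases $A\leq 0$, i.e.\ $m\geq d-3$, are elementary: there $m\widetilde H$ is non-special up to a copy of $\OO_{\widetilde C}$, one has $h^0(\widetilde C,m\widetilde H)\geq\binom{m+2}{2}$, and a direct comparison with $e$ closes the argument through the pencil trick.

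Finally I would assemble the numerics. Feeding the upper bound from the $AF+BG$ step and the lower bound $\binom{d-1-m}{2}-\delta$ into the pencil-trick inequality and simplifying using $2g-2=d(d-3)-2\delta$, the contradiction reduces to an elementary inequality between the integers $d$ and $m$ --- of roughly the shape $(2m-d+2)(2m-d+3)\geq 0$ --- which holds for every such pair, so $H^0(\widetilde C,m\widetilde H-E)$ cannot vanish. The main obstacle, and the only step that is not bookkeeping, is the $AF+BG$-based control of the conditions imposed by $E$ on the adjoint system in the inductive step; this is the substance of the Coppens--Kato argument.
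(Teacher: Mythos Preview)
The paper does not give its own proof of this lemma: it is quoted verbatim as the Main Lemma of \cite{Coppens1991TheGO} and used as a black box. There is therefore no proof in the paper to compare your proposal against.

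That said, your sketch is a faithful outline of the Coppens--Kato argument: the base-point-free pencil trick (here \cref{bpfp-lemma}) applied to $Z=m\widetilde H$, Serre duality and adjunction to pass to the adjoint system $|A\widetilde H-\mathfrak c|$ with $A=d-3-m$, the lower bound $h^0(\widetilde C,A\widetilde H-\mathfrak c)\geq\binom{d-1-m}{2}-\delta$ from counting plane curves through the singular points, and then the genuinely non-formal step --- controlling the number of conditions a general member of $|E|$ imposes on that adjoint system --- handled inductively via Noether's $AF+BG$ theorem. Your identification of that last step as the crux is correct; the rest is indeed bookkeeping. If you intend to include an actual proof rather than a citation, the inductive $AF+BG$ argument would need to be written out carefully (it is the bulk of \cite{Coppens1991TheGO}), but for the purposes of this paper the citation suffices.
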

	
	\begin{theorem}[\cite{Coppens1991TheGO}, Theorem 2.1]
		\label{gonality}
		Let $\widetilde{C}$ be the normalisation of an integral plane curve $C$ of degree $d\geq 4$ with $\delta$ ordinary nodes and cusps as its singularities. Suppose that there exists an integer $m$ such that 
		\begin{equation*}
			d\geq 2m\hspace{.5cm}\text{and}\hspace{.5cm} \delta < (m-1)d-m^2+3
		\end{equation*}
		then $\widetilde{C}$ has no linear systems $g^1_{d-3}$.
	\end{theorem}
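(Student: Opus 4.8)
The plan is to argue by contradiction in the spirit of Coppens and Kato, feeding a putative low-degree pencil into \cref{mainlem}. Suppose $\widetilde{C}$ carried a $g^1_{d-3}$. After subtracting its base locus we obtain a base-point-free $g^1_e$ with $1\le e\le d-3$; write $\mathfrak{g}$ for this pencil, $E$ for a general member, and $\pi\colon\widetilde{C}\to C\subseteq\PP^2_k$ for the normalisation. The first step is the numerical input: from $e\le d-3$ and $\delta<(m-1)d-m^2+3$,
\[
  e+\delta\;\le\;(d-3)+\delta\;<\;(d-3)+(m-1)d-m^2+3\;=\;m(d-m),
\]
and since $m(d-m)\le(m+1)\bigl(d-(m+1)\bigr)$ as soon as $d\ge 2m+1$, \cref{mainlem} (with its ``$m$'' taken to be our $m$) yields $H^0\bigl(\widetilde{C},\,m\widetilde{H}-E\bigr)\neq 0$. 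Hence the general member $E_t$ of $\mathfrak{g}$ is dominated by a plane curve $G_t$ of degree $m$ not containing $C$ (as $m<d$); that is, $\pi^{*}G_t\ge E_t$. The borderline case $d=2m$, which forces $m\ge 2$, is special: there the estimate above is tight only for the single configuration $(e,\delta)=(d-3,\,m^2-2m+2)$, which I would dispose of separately by a boundary refinement of \cref{mainlem}; for every other configuration with $d=2m$ the Main Lemma still applies with index $m$ exactly as above.

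The crux --- and the step I expect to be the main obstacle --- is to upgrade the pointwise statement ``each $E_t$ lies under a degree-$m$ plane curve'' into a global one: following the Castelnuovo-type analysis of Coppens--Kato one organises the curves $G_t$, after possibly shrinking, into an honest pencil $\mathcal{P}\subseteq|\OO_{\PP^2_k}(m)|$ with no fixed component, whose restriction to $C$ cuts out $\mathfrak{g}$ modulo base locus; that is, $\mathfrak{g}=\pi^{*}(\mathcal{P}|_C)-\widetilde{B}$ where $\widetilde{B}\ge 0$ is the base divisor of the linear system $\pi^{*}(\mathcal{P}|_C)$ on $\widetilde{C}$. The delicate part is controlling the residual divisors $\pi^{*}G_t-E_t$ together with the multiplicities of the $E_t$ and of $\mathcal{P}$ at the $\delta$ nodes and cusps of $C$; this is exactly where the full strength of $\delta<(m-1)d-m^2+3$ is used, beyond what \cref{mainlem} already consumed. (One could instead try to iterate the Main Lemma on the residual divisors, or invoke the base-point-free pencil trick of \cref{bpfp-lemma}; either way the bookkeeping at the singular points looks unavoidable.)

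Granting the pencil $\mathcal{P}$, the conclusion is a Bézout count. Since $\deg\bigl(\pi^{*}(\mathcal{P}|_C)\bigr)=md$ we have $e=md-\deg\widetilde{B}$. On the other hand $\mathcal{P}$ has no fixed component and $C$ is integral of degree $d>m$, so two general members of $\mathcal{P}$ meet in a scheme of length $m^2$; intersecting with $C$ and pulling back along $\pi$ increases the degree only at the nodes and cusps lying on the base scheme, and there by at most the $\delta$-invariant of that singularity (namely $1$ for a node or a cusp), so $\deg\widetilde{B}\le m^2+\delta$. Combining,
\[
  \delta\;\ge\;m(d-m)-e\;\ge\;m(d-m)-(d-3)\;=\;(m-1)d-m^2+3,
\]
contradicting the hypothesis. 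This rules out the $g^1_{d-3}$ and proves the theorem.
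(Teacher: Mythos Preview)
The paper does not prove this statement: it is quoted verbatim as \cite[Theorem~2.1]{Coppens1991TheGO} and used as a black box (via \cref{ineq-plane}). So there is no ``paper's own proof'' to compare against; what you have written is a sketch of the original Coppens--Kato argument, not of anything in this paper.

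As a sketch of Coppens--Kato, your outline is in the right spirit: feed a hypothetical $g^1_{d-3}$ into the Main Lemma to get degree-$m$ adjoints, organise these into an actual pencil in $|\OO_{\PP^2}(m)|$, and finish with a B\'ezout-type count. But two steps are not proofs as written. First, the ``upgrade to a pencil'' step, which you yourself flag as the crux, is where essentially all of the work lies; saying ``follow the Castelnuovo-type analysis of Coppens--Kato'' is a pointer to the literature, not an argument, and the control of the residual divisors and of multiplicities at the nodes and cusps is exactly what makes the bound $\delta<(m-1)d-m^2+3$ sharp. Second, your B\'ezout bound $\deg\widetilde{B}\le m^2+\delta$ is asserted rather than proved: the base divisor $\widetilde{B}$ on $\widetilde{C}$ is the minimum of the pullbacks $\pi^{*}(G|_C)$ over $G\in\mathcal{P}$, and the local contribution at a base point $p\in C$ is an intersection multiplicity along $C$, not the length of the base scheme of $\mathcal{P}$ in $\PP^2$ at $p$. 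One must actually show that the contribution at a smooth point of $C$ is bounded by the local length of $G_1\cap G_2$, and separately analyse what happens over a node or cusp; the blanket ``increases by at most the $\delta$-invariant'' is not justified. Finally, the boundary case $d=2m$ (where your chain of inequalities gives only $e+\delta\le m^2-1=(m+1)(d-m-1)$, not strict inequality) is dismissed with a promised ``boundary refinement of \cref{mainlem}'' that is never supplied.
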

	
	As a corollary, we deduce the following inequality:
	\begin{corollary}
		\label[corollary]{ineq-plane}
		Let $\widetilde{C}$ be the normalisation of an integral plane curve $C$ of degree $d$ with $\delta \leq \frac{d-3}{3}$ ordinary nodes and cusps as its singularities. Let $E\in \Div(\widetilde{C})$ be a divisor moving in a pencil. Then
		\begin{equation*}
			d-1 \leq \deg(E)+\delta.
		\end{equation*}
	\end{corollary}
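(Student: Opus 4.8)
The plan is to reduce to the case of a basepoint-free pencil and then invoke the Main Lemma of Coppens--Kato (\cref{mainlem}) in its degenerate instance $m=0$, where it becomes the classical lower bound for the gonality of a plane model. Since $E$ moves in a pencil, $h^{0}(\widetilde C,E)\ge 2$; I would fix a two-dimensional subspace $V\subseteq H^{0}(\widetilde C,\OO_{\widetilde C}(E))$ and let $B\ge 0$ be the base divisor of the associated pencil $|V|$. Then $|V|-B$ is a basepoint-free pencil, and any member $E'$ of it is an effective divisor with $E'\sim E-B$, so $\deg E'=\deg E-\deg B\le\deg E$ and $E'$ moves in a basepoint-free pencil. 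Since $d-1\le\deg E'+\delta$ implies $d-1\le\deg E+\delta$, it suffices to prove the inequality for $E'$; note moreover that $\deg E'\ge 1$, since a genuine pencil on the integral curve $\widetilde C$ cannot have the zero divisor as a member.

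Next, suppose for contradiction that $\deg E'+\delta<d-1$. Since $d-1=(0+1)\bigl(d-(0+1)\bigr)$, \cref{mainlem} applies with $m=0$ and yields
\begin{equation*}
	H^{0}\bigl(\widetilde C,\ 0\cdot\widetilde H-E'\bigr)=H^{0}\bigl(\widetilde C,\OO_{\widetilde C}(-E')\bigr)\neq 0.
\end{equation*}
But $\OO_{\widetilde C}(-E')$ has degree $-\deg E'\le-1<0$, and a line bundle of negative degree on the smooth projective integral curve $\widetilde C$ admits no nonzero global section---a contradiction. Hence $d-1\le\deg E'+\delta\le\deg E+\delta$, as asserted.

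Two remarks on the argument. The standing hypothesis $\delta\le\frac{d-3}{3}$ is not actually used above: \cref{mainlem} produces the inequality for any number $\delta$ of ordinary nodes and cusps; what the hypothesis buys is that the bound is non-vacuous, since then $d-1-\delta\ge\frac{2d}{3}>0$. The only real point---and where I expect a reader to stumble---is to recognise that the $m=0$ instance of the Main Lemma, in effect the assertion that a plane model of degree $d$ with $\delta$ ordinary nodes and cusps has gonality at least $d-1-\delta$, is exactly what is needed. This is cleaner than trying to deduce the corollary from the gonality theorem \cref{gonality}: that statement only forbids a $g^{1}_{d-3}$, so after completing a basepoint-free pencil of degree $\le d-3$ with general points it gives only the weaker bound $d-2\le\deg E+\delta$, which suffices when $\delta\ge1$ (where one may instead apply \cref{gonality} with $m=2$, legitimate since then $d\ge6$ and $\delta<d-1$) but leaves the smooth case $\delta=0$, $\deg E=d-2$ untouched.
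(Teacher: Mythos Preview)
Your argument is correct, and it takes a genuinely different route from the paper's own proof. The paper splits into two cases: when $\delta=0$ it invokes Noether's classical gonality theorem for smooth plane curves to get $\deg(E)\ge d-1$ directly, and when $\delta\ge 1$ it applies \cref{gonality} with $m=2$ (the hypothesis $\delta\le\frac{d-3}{3}$ forces $d\ge 6$ and $\delta<d-1$, so this is legitimate) to conclude that $\widetilde C$ carries no $g^1_{d-3}$, whence $\deg(E)\ge d-2$ and so $\deg(E)+\delta\ge d-1$. Your approach instead invokes the Main Lemma (\cref{mainlem}) in the single degenerate case $m=0$ and derives a contradiction from $H^0(\widetilde C,-E')\neq 0$, which handles both the smooth and singular cases uniformly and, as you observe, does not even require the standing bound $\delta\le\frac{d-3}{3}$. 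The trade-off is that your argument leans on the Main Lemma being valid at $m=0$; this is true and consistent with how the lemma is stated in the paper (``for some integer $m$''), but a cautious reader might want to double-check that Coppens--Kato's proof covers this boundary case. Your closing remarks already anticipate exactly the case split the paper uses, so you have in effect reconstructed both arguments.
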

	\begin{proof}
		When $C$ is smooth, that is $\delta=0$, the curve $\widetilde{C}$ is equal to $C$, which is a smooth plane curve of degree $d$. By Noether's gonality theorem $d-1\leq \deg(E)=\deg(E)+\delta$. Otherwise, the number of singularities $\delta$ is at least  $1$. Applying \autoref{gonality} to $m=2$, we see that $d-2\leq \deg(E)$. Thus $d-1\leq \deg(E)+\delta$.
	\end{proof}
	
	We will also need the following lemma, first observed by Abramovich and Harris \cite{Abramovich1991AbelianVA}. 
	\begin{lemma}[\cite{10.1093/imrn/rnaa137}, Lemma 2.1]
		\label[lemma]{A2}
		Let $C$ be a nice curve and let $e$ be an integer such that $W_{e-1} C_{\overline{k}}$ does not contain positive-dimensional abelian translates. Let $A\subseteq W_e C_{\overline{k}}$ be a positive dimensional abelian translate and let $A_2$ be the image of $A\times A$ under the addition map $W_e C_{\overline{k}}\times W_e C_{\overline{k}}\rightarrow W_{2e} C_{\overline{k}}$. Then all the line bundles associated to points of $A_2$ move in a basepoint-free pencil. 
	\end{lemma}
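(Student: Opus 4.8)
The plan is to run the Abramovich--Harris argument, exploiting that $A$ and $A_2$ are cosets of the \emph{same} abelian subvariety. Write $A=[L_0]+B$ with $B\subseteq\Jac(C)_{\overline{k}}$ an abelian subvariety of dimension $\geq 1$ and $[L_0]\in\Pic^e(C_{\overline{k}})$. Since $B$ is a subgroup, the image $A_2$ of the addition map is the coset $2[L_0]+B\subseteq W_{2e}C_{\overline{k}}$, again a translate of $B$. The key consequence I will use repeatedly is: for every $[M]\in A_2$ and every $[L]\in A$ one has $[M]-[L]\in A$, because $[M]-[L]=[L_0]+b$ for some $b\in B$. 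Thus, fixing $[M]\in A_2$, for every $[L]\in A$ both $L$ and $L'\colonequals M\otimes L^{-1}$ are effective (they represent points of $A\subseteq W_eC_{\overline{k}}$), so any $D\in|L|$ and $D'\in|L'|$ give an effective divisor $D+D'\in|M|$ with $\OO_C(D)\cong L$.

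First I would prove $h^0(C_{\overline{k}},M)\geq 2$ for every $[M]\in A_2$. If instead $|M|=\{E\}$ were a single divisor of degree $2e$, then every $D$ as above would satisfy $D\leq E$; but there are only finitely many effective divisors $\leq E$, hence only finitely many classes $[\OO_C(D)]$ among them, contradicting the fact that these classes run over the infinite set $A$. (This step does not use the hypothesis on $W_{e-1}C_{\overline{k}}$.)

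Next I would prove that every $[M]\in A_2$ is base-point-free. Suppose $p$ lies in the base locus of $|M|$. For each $[L]\in A$, if $p$ were a base point of neither $|L|$ nor $|L'|$ we could choose $D\in|L|$ and $D'\in|L'|$ both avoiding $p$, contradicting $p\in\bigcap_{D''\in|M|}\supp(D'')$. Hence, writing $A_p\colonequals\{[L]\in A:p\in\bigcap_{D\in|L|}\supp(D)\}$ and letting $\iota_M\colon A\to A$, $[L]\mapsto[M]-[L]$, which is a regular involution of $A$, we get $A=A_p\cup\iota_M(A_p)$. Taking closures and using that $A$ is irreducible together with $\iota_M(A)=A$ yields $\overline{A_p}=A$. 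But for $[L]\in A_p$ the twist $L(-p)$ is effective of degree $e-1$, so $A_p-[\OO_C(p)]\subseteq W_{e-1}C_{\overline{k}}$, and therefore its closure $A-[\OO_C(p)]$, a positive-dimensional translate of $B$, lies in the closed set $W_{e-1}C_{\overline{k}}$ --- contradicting the hypothesis. Hence $|M|$ is base-point-free, and combined with the previous step $M$ moves in a base-point-free pencil (a general pencil inside a base-point-free linear system of dimension $\geq 1$ is base-point-free, since a curve in $\mathbb{P}^r$ is disjoint from a general codimension-two linear subspace).

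I expect the base-point-free step to be the only delicate point: one must check that a base point of $|M|$ forces a base point of $|L|$ or of $|M\otimes L^{-1}|$ for \emph{every} $[L]\in A$, and then lean on irreducibility of $A$ to conclude; note that only the density of $A_p$ in $A$ is needed, so there is no need to verify that $A_p$ is Zariski-closed. One should also note that phrases like ``choose $D\in|L|$ varying with $[L]$'' may be read purely pointwise over $\overline{k}$, since all the arguments above are set-theoretic and do not require a Poincar\'e bundle on $A$.
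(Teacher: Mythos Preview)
The paper does not supply its own proof of this lemma; it merely cites \cite[Lemma~2.1]{10.1093/imrn/rnaa137} (the Smith--Vogt paper), which in turn attributes the observation to Abramovich--Harris. Your argument is the standard Abramovich--Harris proof and is correct as written. The two steps---showing $h^0(M)\geq 2$ by a finiteness-of-subdivisors count, and then ruling out base points by the dichotomy $A=A_p\cup\iota_M(A_p)$ together with the hypothesis on $W_{e-1}$---are exactly the ingredients used in the cited references, so there is nothing to compare against within the present paper. Two minor comments: (i) your closing parenthetical about choosing a general pencil inside $|M|$ is not really needed, since in the paper ``moves in a basepoint-free pencil'' is used synonymously with ``$h^0\geq 2$ and basepoint-free''; (ii) you might remark explicitly that $A_p$ is in fact Zariski-closed (it is the locus where the evaluation map $H^0\to\text{fibre at }p$ drops rank, cut out determinantally after trivialising a Poincar\'e sheaf locally), though as you note only density suffices.
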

	
	For the rest of this section, let $C\subseteq\mathbb{P}_{k}^{2}$ be a geometrically integral plane curve of degree $d$ with $\delta$ ordinary nodes and cusps as its singularities. Let $\widetilde{C}\rightarrow C$ be the normalisation of $C$. Then $\widetilde{C}$ is a nice curve over $k$. We begin with the following claim:
	\begin{lemma}
		\label[lemma]{no-avs-plane}
		Assume that $\delta \leq \frac{d-3}{3}$. Then for all positive integers 
		\begin{equation*}
			e < \max\left(\frac{d^2-4\delta}{9}, \text{   } \frac{1}{2}\left(\left\lceil \frac{d+\sqrt{d^2-36\delta}}{6}\right\rceil\left(d-\left\lceil \frac{d+\sqrt{d^2-36\delta}}{6}\right\rceil\right)-\delta\right)\right)
		\end{equation*} 
		the variety $W_e \widetilde{C}_{\overline{k}}$ contains no positive dimensional abelian translates. In particular, the set of rational points $W_e \widetilde{C}(k)$ is finite.
	\end{lemma}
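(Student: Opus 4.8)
The plan is to reduce the lemma to the Mordell--Lang conjecture (\cref{lang}): once $W_e\widetilde{C}_{\overline{k}}$ is known to contain no positive-dimensional abelian translate, the finitely many translates supplied by \cref{lang} are all points, so $W_e\widetilde{C}(k)$ is finite. I would establish the absence of positive-dimensional abelian translates by contradiction together with a minimality trick. Suppose some positive integer below the stated maximum fails the conclusion, and let $e$ be the smallest positive integer for which $W_e\widetilde{C}_{\overline{k}}$ contains a positive-dimensional abelian translate. Since $\widetilde{C}$ has genus $\binom{d-1}{2}-\delta\ge 2$ (using $\delta\le\frac{d-3}{3}$ and $d\ge 4$), the curve $W_1\widetilde{C}\cong\widetilde{C}$ is not an abelian translate, so $e\ge 2$; by minimality, $W_{e'}\widetilde{C}_{\overline{k}}$ contains no positive-dimensional abelian translate for every $0\le e'<e$. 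I will derive a contradiction by showing $e$ must exceed both terms of the maximum.

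First I would dispose of the range $e<\frac{d^2-4\delta}{9}$. The hypothesis $\delta\le\frac{d-3}{3}$ forces $2\delta<d$, so by \cref{ample} the normalisation $\widetilde{C}$ is ample on the blow-up of $\PP^2_k$ at the singular points of $C$; since $\PP^2_k$ has irregularity $0$ and each node or cusp has multiplicity $2$, so that $\widetilde{C}^2=d^2-\sum\delta_i^2=d^2-4\delta$ on this blow-up, \cref{singsmithvogt} forbids positive-dimensional abelian translates in $W_e\widetilde{C}_{\overline{k}}$ throughout this range --- a contradiction. Hence $e\ge\frac{d^2-4\delta}{9}$, and it remains to rule out
\begin{equation*}
	\frac{d^2-4\delta}{9}\le e<\frac{1}{2}\left(m_0(d-m_0)-\delta\right),\qquad m_0\colonequals\left\lceil\frac{d+\sqrt{d^2-36\delta}}{6}\right\rceil .
\end{equation*}

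In this remaining range, let $A\subseteq W_e\widetilde{C}_{\overline{k}}$ be the positive-dimensional abelian translate; writing $A=L_0+B$ with $B$ an abelian subvariety, the image $A_2=2L_0+B$ of $A\times A$ under the addition map lies in $W_{2e}\widetilde{C}_{\overline{k}}$ and is again a positive-dimensional abelian translate. Since $W_{e-1}\widetilde{C}_{\overline{k}}$ has no positive-dimensional abelian translate, \cref{A2} shows every class in $A_2$ moves in a basepoint-free pencil. The displayed inequality rearranges to $2e+\delta<m_0(d-m_0)=(m+1)(d-(m+1))$ with $m\colonequals m_0-1$ (an integer $\ge 1$ whenever this range is non-empty), so \cref{mainlem}, applied to each effective divisor $E$ of degree $2e$ whose class lies in $A_2$, gives $h^0(\widetilde{C},(m_0-1)\widetilde{H}-E)\neq 0$; hence $(m_0-1)[\widetilde{H}]-[E]$ is the class of an effective divisor of degree $(m_0-1)d-2e$. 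As $[E]$ ranges over $A_2=2L_0+B$, these classes form the set $(m_0-1)[\widetilde{H}]-2L_0-B$, a translate of the abelian subvariety $B$ of positive dimension, lying inside $W_{(m_0-1)d-2e}\widetilde{C}_{\overline{k}}$. A direct computation --- using $e\ge\frac{d^2-4\delta}{9}$, the bound $m_0-1\le\frac{d+\sqrt{d^2-36\delta}}{6}$, and squaring $d\sqrt{d^2-36\delta}\le d^2-8\delta$ --- gives $(m_0-1)d-2e<e$, contradicting the minimality of $e$. Thus $W_e\widetilde{C}_{\overline{k}}$ has no positive-dimensional abelian translate, and \cref{lang} gives the finiteness of $W_e\widetilde{C}(k)$.

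I expect the main obstacle to be the numerical bookkeeping in the last step: one must verify that the descent lands in some $W_{e'}$ with $e'<e$ --- equivalently, the inequality $(m_0-1)d-2e<e$ --- which works only because the first part of the argument has already pushed $e$ above roughly $d^2/9$; one must also check that the degenerate cases ($d$ small, or $e'=(m_0-1)d-2e\le 0$) are either vacuous or give an immediate contradiction, and confirm that the range left over is non-empty only when $m_0\ge 2$. A secondary point is to apply the cited inputs with their hypotheses genuinely in force: ampleness of $\widetilde{C}$ on the blow-up for \cref{singsmithvogt}, the absence of positive-dimensional abelian translates in $W_{e-1}\widetilde{C}_{\overline{k}}$ (supplied by minimality) for \cref{A2}, and $2e+\delta<m_0(d-m_0)$ for \cref{mainlem}.
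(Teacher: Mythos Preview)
Your argument is correct and shares the paper's overall architecture: take a minimal counterexample $e$, double via \cref{A2}, descend with \cref{mainlem}, and contradict minimality. The difference lies in how the final inequality $md-2e<e$ is obtained. The paper chooses $m$ \emph{adaptively}, as the integer with $m(d-m)\le 2e+\delta<(m+1)(d-(m+1))$; it invokes \cref{ineq-plane} to guarantee $m\ge 1$, and then checks the purely algebraic inequality $m^2+\delta<\tfrac12\bigl(m(d-m)-\delta\bigr)$ for $1\le m<\tfrac{d+\sqrt{d^2-36\delta}}{6}$, which together with $m(d-m)\le 2e+\delta$ yields $md-2e\le m^2+\delta<e$. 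You instead \emph{fix} $m=m_0-1$ from the outset and lean on the conclusion of the first half, $e\ge\tfrac{d^2-4\delta}{9}$, to force $(m_0-1)d<3e$ by the squaring computation you outline. Your route is a little leaner---it avoids \cref{ineq-plane} entirely---but it couples the two halves of the proof, whereas the paper's second half stands on its own. Both approaches handle the strictness edge case (when $\delta=0$ the squaring step gives only equality, but then the ceiling makes $m_0-1<d/3$ strict), and your remarks about the degenerate cases $e'\le 0$ and $m_0\ge 2$ are all correct.
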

	\begin{proof}
		We first prove the claim for $e < \frac{d^2-4\delta}{9}$. Note that $2\delta < d$ and hence, by \cref{ample}, the curve $\widetilde{C}$ is ample on the blowup $\Bl(S)$ of $S$ along the singular points of $C$. Thus, by \cref{singsmithvogt}, we conclude that there are no positive dimensional abelian translates contained in $W_e \widetilde{C}_{\overline{k}}$ for all positive integers $e < \frac{d^2-4\delta}{9}$.
				
		Now, we prove the claim for $e < \frac{1}{2}\left(\left\lceil \frac{d+\sqrt{d^2-36\delta}}{6}\right\rceil\left(d-\left\lceil \frac{d+\sqrt{d^2-36\delta}}{6}\right\rceil\right)-\delta\right)$. Suppose by way of contradiction that there exists such $e$ for which $W_e \widetilde{C}_{\overline{k}}$ contains a positive dimensional abelian translate; choose $e$ to be minimal. Let $A\subseteq W_e \widetilde{C}_{\overline{k}}$ be a positive dimensional abelian translate and let $A_2\subseteq W_{2e} \widetilde{C}_{\overline{k}}$ be the image of $A\times A$ under the addition map $W_e \widetilde{C}_{\overline{k}}\times W_e \widetilde{C}_{\overline{k}}\rightarrow W_{2e} \widetilde{C}_{\overline{k}}$. Let $p\in A_2$ be a point, by \cref{A2} as a line bundle it moves in a pencil and thus by \cref{ineq-plane}
		\begin{equation*}
			d-1\leq 2e+\delta < \lceil \frac{d+\sqrt{d^2-36\delta}}{6}\rceil(d-\lceil \frac{d+\sqrt{d^2-36\delta}}{6}\rceil).
		\end{equation*}
		Since the function $t(d-t)$ is increasing when $t\in [0,\frac{d}{2}]$, there exists a positive integer $m<\frac{d+\sqrt{d^2-36\delta}}{6}$ such that 
		\begin{equation}
			\label{2e}
			m(d-m)\leq 2e+\delta < (m+1)(d-(m+1)).
		\end{equation}
		Hence, by \cref{mainlem} we obtain a morphism 
		\begin{equation*}
			A_2\rightarrow W_{md-2e}\widetilde{C}_{\overline{k}}
		\end{equation*} 
		given by $p\mapsto m\widetilde{H}-p$, where $\widetilde{H}$ is a pullback of a line $H\subseteq \PP_{k}^{2}$. This morphism is injective, hence $W_{md-e}\widetilde{C}$ contains an abelian translate. By the minimality of $e$, we conclude that $md-2e\geq e$. But by \eqref{2e},
		\begin{equation*}
			md-2e\leq md-m(d-m)+\delta = m^2+\delta.
		\end{equation*}
		We explicitly verify that the quadratic inequality $m^2+\delta<\frac{1}{2}(m(d-m)-\delta)$ on $m$ holds when $1\leq m  < \frac{d+\sqrt{d^2-36\delta}}{6}$, here we use the assumption $\delta \leq \frac{d-3}{3}$ (whence square roots in the bound on $e$). Overall using \eqref{2e} again we conclude that
		\begin{equation*}
			md-2e<e.
		\end{equation*}
		This is a contradiction, so there are indeed no positive dimensional abelian translates in $W_e \widetilde{C}_{\overline{k}}$.
		
		By Lang's conjecture (\cref{lang}), we conclude that the set of rational points $W_e \widetilde{C}(k)$ is finite.
	\end{proof}
	
	We are now ready to prove \cref{plane}.
	
	\begin{theorem}
		\label{plane-proof}
		Let $C\subseteq\mathbb{P}_{k}^{2}$ be a geometrically integral plane curve of degree $d\geq 4$ with $\delta \leq \frac{d-3}{3}$ ordinary nodes and cusps as its singularities. Let
		\begin{equation*}
			e< \max\left(\frac{d^2-4\delta}{9}, \text{   } \frac{1}{2}\left(\left\lceil \frac{d+\sqrt{d^2-36\delta}}{6}\right\rceil\left(d-\left\lceil \frac{d+\sqrt{d^2-36\delta}}{6}\right\rceil\right)-\delta\right)\right)
		\end{equation*}
		be a positive integer. Then the set of degree $e$ points on $C$ admits a decomposition
		\begin{equation*}
			|C|_e=F\cup\bigcup_{i=1}^{n}S_i,
		\end{equation*}
		where $F$ is a finite set, and for each $i$, the set $S_i$ is the set of degree $e$ points obtained by intersecting $C$ with the linear system of degree $m$ curves passing through a certain effective Weil divisor $B_i\subseteq C$. The finite set $F$ is the set of all singular points and regular points which do not move as Cartier divisors. In case that $S_i$ are nonempty, the degree $m$ is a positive integer such that
		\begin{equation*}
			m(d-m)\leq e+\delta < (m+1)(d-(m+1)),
		\end{equation*}
		and in particular the divisors $B_i$ are of degree $\deg B_i = md-e < \frac{e}{2}$.
	\end{theorem}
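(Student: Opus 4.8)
The plan is to obtain the decomposition from \cref{decomposition} applied to the curve $D\colonequals mH\subseteq\PP_k^2$, where $H$ is a line and $m$ is the degree named in the statement, and to read off the auxiliary assertions from the construction. Write $\pi\colon\widetilde C\to C$ for the normalisation. If $e+\delta<d-1$, then by \cref{ineq-plane} no degree $e$ divisor on $\widetilde C$ moves in a pencil, hence no regular degree $e$ point of $C$ moves as a Cartier divisor; so $|C|_e$ is already the finite set $F$ and there is nothing to prove. Assume therefore $e+\delta\geq d-1$. Using $\delta\leq\tfrac{d-3}{3}$ and the bound on $e$, one checks that $e+\delta<\tfrac{d^2}{4}-1$; since $t(d-t)$ is strictly increasing on $[1,d/2]$ and equals $d-1\leq e+\delta$ at $t=1$, there is a unique positive integer $m$ with
\begin{equation*}
 m(d-m)\leq e+\delta<(m+1)\bigl(d-(m+1)\bigr),
\end{equation*}
and necessarily $m<d/2<d$. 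Moreover, arguing exactly as in the proof of \cref{no-avs-plane} (depending on which branch of the maximum $e$ lies below), one gets $m<\tfrac{d+\sqrt{d^2-36\delta}}{6}$.

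Next I would verify that $D=mH$ is an interpolation of degree $e$ points on $C$ in the sense of \cref{degree-e-interpolation}. Condition~\eqref{resisinter} is $D\cdot C=md<d^2=C^2$. For condition~\eqref{surjective}, the twisted ideal sheaf sequence $0\to\OO_{\PP_k^2}(m-d)\to\OO_{\PP_k^2}(m)\to\OO_C(m)\to0$ and $H^1(\PP_k^2,\OO(m-d))=0$ give $H^0(\PP_k^2,\OO(m))\twoheadrightarrow H^0(C,\OO_C(m))=H^0(C,D|_C)$, whence the restriction $H^0(\PP_k^2,D)\to H^0(C,D|_C)$ is surjective. For condition~\eqref{lift}, let $E\in\CDiv(C)$ be a degree $e$ Cartier divisor moving in a basepoint-free pencil; then $\pi^*E$ is a degree $e$ divisor on $\widetilde C$ moving in a basepoint-free pencil (pullback of a globally generated sheaf is globally generated), so by the Main Lemma of Coppens--Kato (\cref{mainlem}), applied with our $m$, which satisfies $e+\delta<(m+1)(d-(m+1))$, one has $H^0(\widetilde C,m\widetilde H-\pi^*E)\neq0$. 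It then has to be checked that this descends along $\pi$ to $H^0(C,D|_C-E)\neq0$; I return to this below. Granting it, both hypotheses of \cref{decomposition} are met (finiteness of $W_e\widetilde C(k)$ being \cref{no-avs-plane}), and it yields the asserted decomposition $|C|_e=F\cup\bigcup_i S_i$, with $F$ the singular points together with the regular degree $e$ points that do not move, $S_i=\{P\in|C|_e:\exists\,D'\in|mH|,\ P=C\cap D'-B_i\}$, and $B_i=\cyc(B_i')$ for an effective Cartier divisor $B_i'$ with $P_i\cequiv D|_C-B_i'$; so $\deg B_i=\deg(D|_C)-e=md-e$ and the shape of the conclusion is exactly as stated.

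It remains to record the degree bound $\deg B_i=md-e<\tfrac e2$. From $m(d-m)\leq e+\delta$ one gets both $md-e\leq m^2+\delta$ and $e\geq m(d-m)-\delta$, so it suffices to verify $m^2+\delta<\tfrac12\bigl(m(d-m)-\delta\bigr)$, i.e.\ $3m^2-md+3\delta<0$, equivalently $m<\tfrac{d+\sqrt{d^2-36\delta}}{6}$ (together with $m\geq1$); this is the quadratic inequality established, with $e$ in place of $2e$, in the proof of \cref{no-avs-plane}.

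The step I expect to be the real obstacle is the descent in condition~\eqref{lift}: Coppens--Kato's Main Lemma furnishes a section on the normalisation $\widetilde C$, whereas \cref{decomposition} requires one on the possibly singular curve $C$ — equivalently, a genuine plane curve of degree $m$ through the relevant divisor. The failure to descend is controlled by the cokernel $\pi_*\OO_{\widetilde C}/\OO_C$, which is supported at the singular points of $C$ and has length $\delta$; in particular, a section of $m\widetilde H-\pi^*E$ vanishing along the conductor divisor $\mathfrak{c}$ (of degree $2\delta$) descends automatically, since $\pi_*\OO_{\widetilde C}(-\mathfrak{c})$ is the conductor ideal inside $\OO_C$. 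One therefore needs either a lower bound $h^0(\widetilde C,m\widetilde H-\pi^*E)>\delta$ — which in the present range can be extracted from Riemann--Roch on $\widetilde C$ using $K_{\widetilde C}\cequiv(d-3)\widetilde H-\mathfrak{c}$ and the smallness of $\delta$ — or, more directly, the plane-curve form of the Main Lemma, which produces a degree $m$ plane curve $D'$ with $C\cap D'\geq\pi_*(\pi^*E)$. Once a descended section is available the rest is formal, as in the proof of \cref{decomposition}: $B_i'\colonequals D|_C-P_i$ is effective Cartier, and for $P\in S_i$ the divisor $P+B_i'$ is an effective Cartier divisor linearly equivalent to $D|_C$, so \cref{varying-intersection} yields $D'\in|mH|$ with $C\cap D'=\cyc(P+B_i')=P+B_i$, i.e.\ $P=C\cap D'-B_i$; identifying $F$ with the stated finite set uses the isomorphism $\pi^{-1}(C_{\text{reg}})\xrightarrow{\sim}C_{\text{reg}}$ and \cref{basepoint-free}, just as there.
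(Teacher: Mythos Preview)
Your approach matches the paper's exactly: locate $m$ via \cref{ineq-plane} and the defining inequality, set $D=mH$, verify the three interpolation conditions, apply \cref{decomposition} together with \cref{no-avs-plane}, and finish with the quadratic inequality $3m^2-md+3\delta<0$ for $\deg B_i<e/2$. For the descent in condition~\eqref{lift}, which you single out as the obstacle, the paper simply takes $H$ to be a line missing the singular locus and replaces $E$ by a linearly equivalent $E'$ supported on $C_{\text{reg}}$, then asserts $H^0(C,D|_C-E')=H^0(C,\pi_*\OO_{\widetilde C}(m\widetilde H-\pi^*E'))\cong H^0(\widetilde C,m\widetilde H-\pi^*E')$ directly (the same identification it uses in the proofs of \cref{bpfp-lemma} and \cref{decomposition}); no conductor or dimension-count argument is invoked.
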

	
	\begin{proof}
		As the degree of $C$ is positive, we see that $C$ is ample on $\PP^2_k$. Let $\pi:\widetilde{C}\rightarrow C$ be the normalisation of $C$. By \cref{no-avs-plane}, the set of rational points on the Brill--Noether locus $W_e\widetilde{C}$ is finite. If all regular degree $e$ points $P\in |C_\text{reg}|_e$ do not move as Cartier divisors, we conclude that the set of degree $e$ points $|C|_e$ is finite. Otherwise, there exists a degree $e$ regular point $P\in |C_\text{reg}|_e$ which moves. By \cref{basepoint-free}, it moves in a basepoint-free pencil. Let $\widetilde{P}\colonequals \pi^{*}(P)$ be the unique closed point in $\widetilde{C}$ above $P$. Then $\widetilde{P}$ is a degree $e$ divisor on $\widetilde{C}$ moving in a basepoint-free pencil. Thus by \cref{ineq-plane},
		\begin{equation*}
			d-1\leq e+\delta < \lceil \frac{d+\sqrt{d^2-36\delta}}{6}\rceil(d-\lceil \frac{d+\sqrt{d^2-36\delta}}{6}\rceil);
		\end{equation*}
		note that the second inequality is clear when $e<\frac{1}{2}\left(\left\lceil \frac{d+\sqrt{d^2-36\delta}}{6}\right\rceil\left(d-\left\lceil \frac{d+\sqrt{d^2-36\delta}}{6}\right\rceil\right)-\delta\right)$; when $e<\frac{d^2-4\delta}{9}$ it follows from our assumption that $\delta \leq \frac{d-3}{3}$. Since the function $t(d-t)$ is increasing when $t\in [0,\frac{d}{2}]$, there exists a positive integer $1\leq m<\lceil\frac{d+\sqrt{d^2-36\delta}}{6}\rceil$ such that 
		\begin{equation}
			\label{e}
			m(d-m)\leq e+\delta < (m+1)(d-(m+1));
		\end{equation}
		note that as $m$ is an integer we must have $m<\frac{d+\sqrt{d^2-36\delta}}{6}$. Let $H\subseteq \PP_{k}^2$ be a line not passing through any singular point of $C$, let $D=mH$. We now claim that $D$ is an interpolation of degree $e$ points on $C$; we have to verify the three conditions of \cref{degree-e-interpolation}. Note that $m<\frac{d}{2}$, so $C\cdot D = md<d^2=C^2$, this is condition \eqref{resisinter}. Due to the vanishing of $H^1(\PP_k^2,-)$, the map
		\begin{equation*}
			H^0(\PP^2_k,D)\rightarrow H^0(C,D|_C)
		\end{equation*}
		is surjective, this is condition \eqref{surjective}. Let $E\in \CDiv(C)$ be a degree $e$ Cartier divisor on $C$ which moves in a basepoint-free pencil. Let $E'\in \CDiv(C)$ be a divisor linearly equivalent to $E$ which is supported on $C_{\text{reg}}$. Then 
		\begin{equation*}
			H^0(C, D|_C-E)\cong H^0(C, D|_C-E') = H^0(C,\pi_{*}\OO_{\widetilde{C}}(m\widetilde{H}-\pi^{*}E'))\cong H^0(\widetilde{C}, m\widetilde{H}-\pi^{*}E'),
		\end{equation*}
		where $\widetilde{H}$ is the pullback of $H$ under $\widetilde{C}\rightarrow C\rightarrow \PP^2_k$. Note that $\pi^{*}E'\in \Div(\widetilde{C})$ is a degree $e$ divisor moving in a basepoint-free pencil. Hence, by \cref{mainlem}, we conclude that the divisor $D|_C-E$ has global sections, which is condition \eqref{lift}. Thus, the conditions of \cref{decomposition} are satisfied; applying it gives the desired decomposition.
		
		Finally, by \eqref{e} the degree of the divisors $B_i$ satisfies 
		\begin{equation*}
			\deg B_i=md-e<m^2+\delta.
		\end{equation*}
		Since $0 < m < \frac{d+\sqrt{d^2-36\delta}}{6}$ and by the assumption $\delta \leq \frac{d-3}{3}$, we have $m^2+\delta<\frac{1}{2}(m(d-m)-\delta)$ and overall using \eqref{e} again we conclude that $\deg B_i < \frac{e}{2}$.
	\end{proof}
	
	\bibliographystyle{alpha}
	\bibliography{bibliography}
\end{document}